\documentclass{amsart}

\usepackage{amssymb}
\usepackage{color}

\newtheorem{theorem}{Theorem}[section]
\newtheorem{lemma}[theorem]{Lemma}
\newtheorem{corollary}[theorem]{Corollary}

\theoremstyle{definition}
\newtheorem{definition}[theorem]{Definition}

\newcommand{\zf}{\mathnormal{\mathsf{ZF}}}
\newcommand{\dc}{\mathnormal{\mathsf{DC}}}
\newcommand{\ad}{\mathnormal{\mathsf{AD}}}
\newcommand{\ac}{\mathnormal{\mathsf{AC}}}
\newcommand{\zfc}{\mathnormal{\mathsf{ZFC}}}

\theoremstyle{remark}
\newtheorem{remark}[theorem]{Remark}

\numberwithin{equation}{section}

\begin{document}

\title[Hindman and Owings-like theorems without $\ac$]{Hindman and Owings-like theorems \\ without the Axiom of Choice}

\author[Guzmán-Vega]{José Alberto Guzmán-Vega}
\address{Escuela Superior de Física y Matemáticas, Instituto Politécnico Nacional, Av. Instituto Politécnico Nacional s/n Edificio 9, Col. San Pedro Zacatenco, Alcaldía Gustavo A. Madero, 07738, CDMX, México}
\email{jguzmanv1501@alumno.ipn.mx}

\author[Fernández-Bretón]{David José Fernández-Bretón}
\address{Escuela Superior de Física y Matemáticas, Instituto Politécnico Nacional, Av. Instituto Politécnico Nacional s/n Edificio 9, Col. San Pedro Zacatenco, Alcaldía Gustavo A. Madero, 07738, CDMX, México}
\email{dfernandezb@ipn.mx}

\author[Sarmiento-Rosales]{Eliseo Sarmiento-Rosales}
\address{Escuela Superior de Física y Matemáticas, Instituto Politécnico Nacional, Av. Instituto Politécnico Nacional s/n Edificio 9, Col. San Pedro Zacatenco, Alcaldía Gustavo A. Madero, 07738, CDMX, México}
\email{esarmiento@ipn.mx}

\subjclass[2010]{Primary 05D10, 03E60; Secondary 03E15, 20K01}

\date{}

\dedicatory{}

\commby{}

\begin{abstract}
We investigate Hindman- and Owings-type Ramsey-theoretic statements in Zermelo–Fraenkel set theory without the Axiom of Choice, with some occasional extra assumptions (such as the Axiom of Dependent Choice and/or the Axiom of Determinacy). We study several variations of Hindman's theorem on $\mathbb Q$-vector spaces; notably, we show that the uncountable analog of Hindman's theorem fails for the additive group of $\mathbb R$ (under $\zf$), and for $\mathbb{Q}$-vector spaces of uncountable dimension (under $\dc$ if such dimension is not well-orderable), among other results. In contrast, for Owings-type configurations, we obtain several positive results, especially when assuming $\ad$. These results highlight the interaction between determinacy, algebraic structure, and dimension in the study of infinite Ramsey theory without the Axiom of Choice.
\end{abstract}


\subjclass[2020]{Primary 03E02; Secondary 03E25, 03E60, 05C55, 05D10, 05E99}

\keywords{Axiom of Determinacy, Ramsey theory, Hindman’s theorem, Owings’ problem, partition relations, infinite combinatorics, vector spaces over $\mathbb{Q}$}

\maketitle

\section{Introduction}
Ramsey theory studies the emergence of structured monochromatic configurations inside arbitrary colourings of large sets. 
Beginning with Ramsey's theorem \cite{Ramsey1930}, a broad class of results can be expressed via partition relations, and the modern viewpoint emphasizes how set-theoretic strength and algebraic structure interact in infinite combinatorics (see, e.g., \cite{Jech2003, Kanamori2003, Kunen1980}). 

In additive Ramsey theory, Hindman's theorem \cite{Hindman1974} is a central example: every finite colouring of $\mathbb{N}$ admits an infinite set $X$ such that the set of all finite sums $\mathrm{FS}(X)$ is monochromatic. 
This phenomenon connects naturally with algebra in the Stone--\v{C}ech compactification and related structural methods \cite{HindmanStrauss2012}. A major theme in the last decade has been the extent to which Hindman-type statements, in the context of groups or semigroups beyond $\mathbb N$, persist beyond the countable setting. 
Fern\'andez-Bret\'on showed that Hindman's theorem is essentially countable, in the sense that uncountable analogues fail on any commutative cancellative semigroup \cite{Fernandez2018}, and further strong failures for higher analogues were established in \cite{FernandezRinot2017}. 
More recently, Hindman-like principles with uncountably many colours and finite monochromatic configurations have been investigated in \cite{FernandezLee2020, Komjath2018}. Although Hindman's original proof in the context of $\mathbb N$ does not require the Axiom of Choice, many of the aforementioned results investigating analogs of Hindman's theorem in other semigroups do use the Axiom of Choice, although in some of them it is not clear whether such uses are strictly necessary.

A second, closely related line of work concerns \emph{pairwise-sum} (Owings-type) configurations. 
Owings \cite{Owings1974} asked whether every 2-colouring of $\mathbb{N}$ contains an infinite set $X$ such that $X+X$ is monochromatic. Despite some early partial progress \cite{Hindman1979}, the problem remains open to this day; meanwhile, analogues for finite colourings of $\mathbb{R}$ instead of $\mathbb{N}$ have been studied extensively: see for instance \cite{HindmanLeaderStrauss2017, KomjathLeaderRussellShelahSoukupVidnyanszky2019, LeaderRussell2020} and the development of methods aimed at avoiding large-cardinal assumptions \cite{Zhang2020}. 

In this paper, we revisit both Hindman- and Owings-type statements in $\zf$, without assuming any choice-related principle when possible, but utilizing hypotheses such as the Axiom of Dependent Choice $\dc$ or the Axiom of Determinacy $\ad$ when necessary. The original motivation for this work was to study these Ramsey-theoretic principles under $\ad$, or under $\ad+\dc$ if necessary, but we realized along the way that several of the proofs can actually be carried out in $\zf$ or in $\zf+\dc$ only. This will be the criterion that informs the particular choice of additional assumptions beyond $\zf$. Hence, {\bf all of the proofs in this paper take place in $\zf$, unless an extra assumption is explicitly noted in the header of the corresponding theorem}.

\subsection*{Main contributions}\label{sec:main-contributions}
Our results fall into two families, reflecting the distinct nature of finite-sum and pairwise-sum configurations. Throughout the remainder of the paper, we will refer to a ``$\kappa$-$\theta$ configuration'' to denote the relevant Ramsey-type statement for $\theta$ colours, where one attempts to obtain a monochromatic set generated by $\kappa$ elements.

\begin{itemize}
  \item \textbf{Hindman-like statements.}
  We begin by showing positive results for both the finite-finite and infinite-finite configurations on any semigroup; this largely follows directly from the original Hindman's theorem, and only $\zf$ is needed. In contrast, we obtain negative results for the uncountable-finite configuration, both in $\mathbb R$ and in any $\mathbb Q$-vector space with basis (the former in $\zf$, the latter in $\zf+\dc$); this highligts the role of cancellativity and the rigid support structure of such spaces (cf. \cite{Fernandez2018}). The result for $\mathbb R$ in $\zf$ is especially revealing, because it was formerly known only as a corollary of \cite[Theorem 5]{Fernandez2018}, requiring the use of the Axiom of Choice. Along the way, we also show another negative result for $\mathbb R$ regarding the finite-infinite configuration; while this result constitutes a particular case of \cite[Theorem 12]{FernandezLee2020} in $\zfc$, it was not known before as a result in $\zf$ only. Finally, we obtain a negative result for the $3$-infinite configuration on any $\mathbb Q$-vector space with basis, generalizing \cite[Theorem 5]{FernandezLee2020}; on the other hand, determinacy-driven partition properties at $\omega_1$ and $\omega_2$ yield a positive statement for the $2$-infinite configuration, under $\ad$, whenever we are dealing with a $\mathbb Q$-vector space with a basis of cardinality $\omega_1$ or $\omega_2$.

  \item \textbf{Owings-like statements.}
  Under $\ad$, we obtain a positive result for the infinite-infinite configuration in $\mathbb R$, placing Owings-type configurations in a determinacy setting and exploiting regularity phenomena available under this axiom. In contrast, we obtain a negative result for the finite-infinite configuration in $\mathbb Q$-vector spaces with basis. Finally, regarding the infinite-finite configuration (the configuration where most questions remain open, even in the $\zfc$ context), we are able to obtain a positive result under $\ad$ for $\mathbb{Q}$-vector spaces of dimension $\omega_1$ or $\omega_2$, in the spirit of \cite{LeaderRussell2020}.
\end{itemize}

\subsection*{Organization of the paper}\label{sec:paper-organization}
Section \ref{sec:hindman} develops Hindman-like results: we introduce the finite-sums arrow notation, establish the simplest cases that follow directly from the classical Hindman's theorem, and set up the structural lemmas used throughout; after that we prove the main negative and positive theorems for $\mathbb{Q}$-vector spaces. Section \ref{sec:owings} treats Owings-like results: we formalize the pairwise-sum arrow notation, prove the determinacy-based theorem for $\mathbb{R}$ under countable colourings, discuss limitations in cancellative groups, and conclude with the positive theorem for $\mathbb{Q}$-vector spaces of dimension $\omega_i$ ($i\in\{1,2\}$) for finitely many colours. There is a Section 4 where we summarize the results obtained, the additional axioms needed for each, and propose further directions of study.

\section{Hindman-like results}\label{sec:hindman}

In this section we analyze finite-sums partition relations in semigroups, with particular emphasis on the additive group of $\mathbb R$ and $\mathbb{Q}$-vector spaces. 
We first introduce the relevant notation, briefly discuss the simplest cases (the infinite-finite and finite-finite configurations), and set up a combinatorial tool that will allow us to control supports of vectors. 
We then prove that uncountable finite-sums homogeneity fails in $\mathbb R$ (in $\zf$), and also in vector spaces of uncountable dimension (with $\dc$ as an additional assumption). There is also a negative result for the $3$-infinite configuration on $\mathbb Q$-vector spaces with basis. Finally, we contrast this negative phenomenon with positive finite configurations obtained from strong partition properties at $\omega_1$ and $\omega_2$ under $\ad$.

\subsection{Finite-sums notation and combinatorial tools}\label{subsec:hindman-tools}

As is customary in infinite combinatorics, this text adopts Hungarian notation. Recall that, for a well-ordered cardinal $\kappa$ and a $\kappa$-sequence $\vec{x}$ in an additive structure,\linebreak the symbol $\operatorname{FS}(\vec{x})$ denotes the {\bf set of finite sums from $\vec{x}$},
defined by
\begin{equation*}
    \operatorname{FS}(\vec{x})=\left\{\sum_{i\leq j}x_{h_{i}}\big|j<\omega\text{ and }h_{0}<\cdots<h_{j}<\kappa\right\},
\end{equation*}
where the terms are summed in the natural ascending order of their indices.\linebreak
In particular, if the underlying structure is commutative and $\vec{x}$ is injective, this simplifies to $\operatorname{FS}(X)=\left\{\sum_{a\in F}a\big|F\subseteq X\text{ is finite and nonempty}\right\}$ (where $X$ denotes the range of the injective sequence $\vec{x}$).

\begin{definition} \label{def:hindman-property}
Let $(G,+)$ be a semigroup, and let $\kappa,\theta$ be cardinals.\linebreak 
We write $G\to(\kappa)^{\operatorname{FS}}_{\theta}$ to denote the statement that for every colouring of $G$ with $\theta$ colours, there exists an injective sequence $\vec{x}$ in $G$, indexed over $\kappa$, such that $\operatorname{FS}(\vec{x})$ is monochromatic.
\end{definition}

Hindman's theorem \cite{Hindman1974} is the statement that $\mathbb N\to(\omega)^{\operatorname{FS}}_{\theta}$ for all finite $\theta$. Another related statement (known to be equivalent to Hindman's theorem even before it was proven) is that every finite colouring of $[\omega]^{<\omega}\setminus\{\varnothing\}$ admits an infinite block sequence\footnote{For any index set $I\subseteq\omega$, the family $B=(b_{i}\mid i\in I)$ is called a {\bf block sequence} if, for any two indices $i, j\in I$ with $i<j$, we have $\max(b_{i})<\min(b_{j})$.} $B$ for which $\operatorname{FU}(B)$ is monochromatic, where $\operatorname{FU}(B)$ denotes the set of unions of finitely many elements from $B$. Both of the statements just mentioned can be proven within ZF alone, without recourse to any principles related to choice or determinacy; in fact, Hindman's original proof from~\cite{Hindman1974} (as well as Baumgartner's proof~\cite{Baumgartner1974}) is purely combinatorial and does not use the Axiom of Choice at all.

The celebrated Galvin--Glazer ultrafilter proof of this result (see~\cite{HindmanStrauss2012}) seemingly does use the Axiom of Choice to obtain idempotent ultrafilters\footnote{The question of how much choice is required to guarantee the existence of idempotent ultrafilters is currently open. For partial results concerning this question, the reader may consult~\cite{Tachtsis2018,DiNassoTachtsis2018,FernandezNavarroSoria2025}.}, but this proof can be turned into a $\zf$ proof by the following metamathematical argument: given a colouring $c:\mathbb N\longrightarrow 2$, work in $\mathbf{L}[c]$, where $\ac$ holds, to obtain an infinite $A\subseteq\mathbb N$ such that $\operatorname{FU}(A)$ is $c$-monochromatic; this last statement is absolute and therefore still true ``in the real world''.

Hence, both the finite-finite and infinite-finite configurations for Hindman's theorem can be proved in $\zf$ (the former can be deduced from the latter via a compactness argument); however, for the general context of semigroups it is not clear {\it a priori} how to deduce this without using ultrafilters at some point. In what follows, we provide explicit proofs that, combined with one of the purely combinatorial proofs of Hindman's theorem for $\mathbb N$, completely avoid the use of ultrafilters and absoluteness arguments. We now proceed to deduce a general statement for semigroups, using the results just mentioned. We begin by treating the infinite-finite configuration.



\begin{definition}
    A semigroup $(G, +)$ is said to be \textbf{weakly left-cancellative} if for all $e, f\in G$, the set $\{g\in G\mid e+g=f\}$ is finite.
\end{definition}

\begin{theorem}\label{fin-inf-hindman}
    If $(G, +)$ is a semigroup, then for every finite colouring \linebreak$c:G\longrightarrow\theta$ there exists an $\omega$-sequence $\vec{x}$ such that $\operatorname{FS}(\vec{x})$ is $c$-monochromatic. Moreover, if $G$ is Dedekind-infinite and weakly left-cancellative, then the sequence $\vec{x}$ can be taken to be injective; in other words, with these additional hypotheses we have $G\to(\omega)^{\operatorname{FS}}_{\theta}$ for every finite $\theta$.
\end{theorem}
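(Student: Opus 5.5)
The plan is to transfer the block-sequence form of Hindman's theorem (every finite colouring of $[\omega]^{<\omega}\setminus\{\varnothing\}$ admits an infinite block sequence $B$ with $\operatorname{FU}(B)$ monochromatic, provable in $\zf$) to $G$ through an ordered-sum map. Fix a sequence $\vec y=(y_n\mid n<\omega)$ in $G$. For the first claim, $G$ is nonempty and we take $y_n=g$ for a single $g\in G$. For the second claim, Dedekind-infiniteness gives an injective $\vec y$. Define
\[
s(F)=y_{n_0}+y_{n_1}+\cdots+y_{n_k}\qquad\text{for }F=\{n_0<\cdots<n_k\}\in[\omega]^{<\omega}\setminus\{\varnothing\},
\]
and colour $F$ by $c(s(F))$. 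No choice is used here, since $s$ is explicitly defined from $\vec y$.

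Applying the $\operatorname{FU}$ version gives a block sequence $B=(b_i\mid i<\omega)$ with $\operatorname{FU}(B)$ monochromatic. Put $x_i=s(b_i)$. The key observation is that, because $\max b_{h_0}<\min b_{h_1}$ and so on, associativity yields
\[
x_{h_0}+\cdots+x_{h_j}=s(b_{h_0}\cup\cdots\cup b_{h_j})\qquad\text{whenever }h_0<\cdots<h_j .
\]
Hence $\operatorname{FS}(\vec x)\subseteq s[\operatorname{FU}(B)]$, which is $c$-monochromatic. This proves the first claim, and notably it does not need commutativity.

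For injectivity, the point is that any sub-block sequence, or any block sequence whose members are unions of consecutive runs of blocks from $B$, still has its $\operatorname{FU}$ inside $\operatorname{FU}(B)$. It therefore suffices to recursively build a block sequence $(c_k)$ of such unions with pairwise distinct values $s(c_k)$. At each stage we take the least candidate in a fixed enumeration, so the recursion is definable and needs no choice.

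The main obstacle is showing that at each stage a new value is available. Suppose $w_0,\dots,w_{k-1}$ have been used. I would look at the candidates $u_r=s(b_m\cup\cdots\cup b_{m+r})$ for $m$ beyond all previously used blocks, together with the single blocks $x_m$. Weak left-cancellativity should rule out all of these lying in the finite set $W=\{w_0,\dots,w_{k-1}\}$. Since $u_r=u_{r-1}+x_{m+r}$, only finitely many $x_{m+r}$-values are compatible with landing in $W$ from a given prefix. I also expect to need to push this down to the level of $\vec y$: $s(F)$ is obtained from $s(F\setminus\{\max F\})$ by adding $y_{\max F}$, and $\vec y$ is injective. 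So for a fixed prefix only finitely many choices of the last index keep the value in $W$. The difficulty is doing this inside $\operatorname{FU}(B)$ rather than for arbitrary $F$. If that fails, my fallback is to refine the colouring before applying Hindman, so that it also records enough information to force distinct values, and then argue by the same finite-to-one property.
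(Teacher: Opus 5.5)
Your first claim is correct and is exactly the paper's argument: colour $[\omega]^{<\omega}\setminus\{\varnothing\}$ by $c\circ s$, apply the $\operatorname{FU}$ form of Hindman's theorem, and use associativity to get $\operatorname{FS}(\vec x)\subseteq s[\operatorname{FU}(B)]$.

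The injectivity part, however, has a genuine gap. Your strategy is to fix an arbitrary injective $\vec y$, apply Hindman's theorem, and only afterwards look for distinct values inside $\operatorname{FU}(B)$; this cannot work. Take $G=\mathbb{Z}$, which is cancellative and Dedekind-infinite, with $y_{2k}=k+1$ and $y_{2k+1}=-(k+1)$. This $\vec y$ is injective. The block sequence $b_k=\{2k,2k+1\}$ satisfies $s[\operatorname{FU}(B)]=\{0\}$, so it is monochromatic for every colouring and is a legitimate output of the $\operatorname{FU}$ theorem. Since you only know that \emph{some} such $B$ exists, your argument must handle this one. Yet every element of $\operatorname{FU}(B)$, in particular every union of consecutive runs of blocks, has value $0$, so not even two distinct values are available.

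Your finite-to-one observation is true for arbitrary $F$. But the expectation that weak left-cancellativity keeps the candidates out of $W$ fails once you are confined to $\operatorname{FU}(B)$. The values $x_{m+r}$ may repeat, so ``finitely many admissible values'' does not give ``finitely many admissible indices''. The fallback of refining the colouring is unspecified. Any refinement depending only on $s(F)$ still admits this $B$, and it is unclear how a finite colouring could exclude all collapsing block sequences.

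The missing idea is to use your finite-to-one observation \emph{before} invoking Hindman's theorem, when constructing $\vec y$. Fix an injection $i:\omega\to G$ and recurse as follows. Let $E_n=\operatorname{FS}(y_j\mid j<n)$, and let $y_n=i(k)$ for the least $k$ such that $i(k)$ lies outside
\[
E_n\cup\{g\in G\mid \exists e,f\in E_n\,(e+g=f)\}.
\]
This set is finite by weak left-cancellativity, and the choice is definable, so no choice is used.

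Then $s(F)\neq s(F')$ whenever $\max F\neq\max F'$. Indeed, if $n=\max F>\max F'$, then $s(F')\in E_n$. On the other hand, $s(F)$ is either $y_n$ or $e+y_n$ with $e\in E_n$, so $s(F)\notin E_n$. The blocks of any block sequence have strictly increasing maxima, so the $x_i=s(b_i)$ are automatically pairwise distinct and no thinning of $B$ is needed. This is precisely the paper's construction with the sets $E_n$ and $D_n$.
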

\begin{proof}
    Take any sequence $(h_n\big|n<\omega)$ of elements of $G$ and, given a colouring $c:G\longrightarrow\theta$, define the function $d:[\omega]^{<\omega}\setminus\{\varnothing\}\to G$, by $d(f):=\sum_{n\in f}h_{n}$ (summed in increasing index order). Then $d$ is a finite colouring of $[\omega]^{<\omega}$, so there exists a block sequence $B=(b_{n}|n<\omega)$ in $[\omega]^{<\omega}\setminus\{\varnothing\}$ for which $\operatorname{FU}(B)$ is $c\circ d$-monochromatic. Now, by defining $x_n=d(b_{n})$ for all $n<\omega$, we must have that $\operatorname{FS}(\vec{x})$ is $c$-monochromatic.

    Now, in order to prove the stronger statement (that we can take $\vec{x}$ to be injective) under the stronger hypotheses (that $G$ is Dedekind-infinite and weakly-left cancellative), all we have to do is show that the sequence $(h_n\big|n<\omega)$ can be chosen in such a way that the function $d$ above is injective. For this, we fix an injection $i:\omega\to G$ (since $G$ is Dedekind-infinite), and recursively construct the $h_n$ along with sets $E_n,D_n$. We first let $h_0=i(0)$ (and $E_0=D_0=\varnothing$, for definiteness). Now, for $n\geq 1$, we define
%
    \begin{align*}
        E_{n}:=&\operatorname{FS}(h_{j}\mid j<n)\\
                D_{n}:=&\{g\in G\mid\exists e\in E_{n}\cup\{0\}\;\exists f\in E_n\;(e+g=f)\big\},
    \end{align*}
    Clearly $E_{n}$ is finite, and so is $D_{n}$ by the weak left-cancellativity of $G$. Given that $i[\omega]$ is infinite, one may define $h_{n}:=\min(i[\omega]\setminus(D_{n}\cup E_{n}))$. It is readily checked that the sequence $(h_n\big|n<\omega)$ is as sought.
\qedhere
\end{proof}

\begin{remark}
    As a corollary of Theorem \ref{fin-inf-hindman}, we obtain the following statement: assuming that every infinite set is Dedekind-infinite, for every infinite weakly left-cancellative semigroup $G$ and for every finite $\theta$, we have $G \to (\omega)_\theta^{\operatorname{FS}}$. In particular, this conclusion follows from each of the axioms of Countable Choice and Dependent Choice.
\end{remark}

It is of note that the proof of the stronger case of the previous theorem requires the assumption that $G$ is Dedekind infinite, as without it, the result may fail, even for infinite Boolean groups, as shown in \cite[Theorem 5.2]{Fernandez2024}.

We now proceed to study the finite-finite configuration. Note that it follows immediately from Theorem~\ref{fin-inf-hindman} that, for every finite $\kappa,\theta$, for every semigroup $S$, and for every colouring $c:S\longrightarrow\theta$, there exists a $\kappa$-indexed sequence $\vec{x}$ of elements of $S$ such that $\operatorname{FS}(\vec{x})$ is $c$-monochromatic. However, if we want to ensure that the sequence $\vec{x}$ can be taken to be injective, we seem to need further hypotheses and some more work.

\begin{definition}
    Let $L\in\mathbb{N}$ with $L\geq1$. A semigroup $(G, +)$ is termed \textbf{$L$-left cancellative} if $\forall e, f\in G\;(|\{g\in G\mid e+g=f\}|\leq L)$.
\end{definition}

\begin{theorem}\label{fin-fin-hindman}
    For all finite $\kappa,\theta$ and $L\geq 1$, there exists $S\in\mathbb{N}$ such that, for every $L$-left cancellative semigroup $(G,+)$ with $|G|\geq S$, $G\to (\kappa)_{\theta}^{\operatorname{FS}}$.
\end{theorem}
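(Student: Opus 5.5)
We will mimic the proof of Theorem~\ref{fin-inf-hindman}, replacing the infinite Hindman theorem by its finite version. The finite version of Hindman's theorem (Folkman's theorem in its union form) is provable in $\zf$ by compactness from the infinite block-sequence version, or directly by a combinatorial argument. It says: for all finite $\kappa,\theta$ there is $N=N(\kappa,\theta)$ such that every $\theta$-colouring of $[N]^{<\omega}\setminus\{\varnothing\}$ admits a block sequence $b_0,\dots,b_{\kappa-1}$ of nonempty subsets of $N$ with $\operatorname{FU}(b_i\mid i<\kappa)$ monochromatic. Since everything here is finite, no choice is involved.

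The plan is then as follows. Given $\kappa,\theta,L$, fix $N=N(\kappa,\theta)$. Given $G$ and $c:G\to\theta$, we first build $h_0,\dots,h_{N-1}\in G$ such that the map $d(f)=\sum_{n\in f}h_n$ (summed in increasing order) is injective on $[N]^{<\omega}\setminus\{\varnothing\}$. We then apply the finite Hindman theorem to $c\circ d$ and set $x_i=d(b_i)$. Because the $b_i$ form a block sequence, every element of $\operatorname{FS}(x_i\mid i<\kappa)$ equals $d$ of the corresponding union, so it is monochromatic. Injectivity of $\vec x$ follows from the injectivity of $d$ and the fact that the $b_i$ are distinct, being pairwise disjoint and nonempty.

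The construction of the $h_n$ is the same recursion as before, now with explicit bounds. At stage $n<N$ we set $E_n=\operatorname{FS}(h_j\mid j<n)$, so $|E_n|\le 2^n-1$. We then let $D_n=\{g\mid \exists e\in E_n\cup\{0\}\,\exists f\in E_n\ (e+g=f)\}$, interpreting the case "$e=0$" as the condition $g=f$, so that no identity is needed. By $L$-left cancellativity, $|D_n|\le (|E_n|+1)\,|E_n|\,L\le 4^nL$. Choosing $h_n\notin D_n\cup E_n$ requires $|G|>|D_n\cup E_n|$. It therefore suffices to take $S=4^{N}L+2^{N}+1$, and the $h_n$ can be picked by any finite sequence of choices, which is available in $\zf$.

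The remaining step, which the earlier proof dismissed as ``readily checked'', is verifying that $d$ is injective. I expect this to be the main point needing care, since $G$ need not be commutative. Suppose $d(f)=d(f')$ with $f\neq f'$, and let $n=\max(f\,\triangle\, f')$, say $n\in f$. Write $d(f)=e+h_n+(\text{tail})$, where the tail is the sum over the common elements of $f$ and $f'$ above $n$.

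The difficulty is that the common tail cannot be cancelled on the right, since we only have left cancellativity. To handle this I would strengthen the avoidance condition so that $h_n$ also avoids all solutions $g$ of $e+g=f$ in which the sums are taken along appropriate arrangements. If that does not work, I would induct on the structure from the right. The cleanest fix, which I would try first, is to reverse the roles: order the sums so that the newly chosen element $h_n$ occurs at the extreme position relevant to left cancellation. Concretely, I would build the sequence so that new elements are prepended, that is, reindex by $n\mapsto N-1-n$. Then $d(f)=h_n+d(f\setminus\{n\})$ with $n=\min$, and comparing $d(f)$ with $d(f')$ reduces either to an equation of the form $e+g=f$ excluded by $D_n$, or to left cancellation of the common prefix.

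In that formulation the check becomes routine. Any remaining non-cancellable common part is itself a member of $E_n\cup\{0\}$, so the equation it produces is one of those excluded when $h_n$ was chosen, and the counting bound on $S$ is unchanged.
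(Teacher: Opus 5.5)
You were right to distrust full injectivity of $d$, but the repair you propose does not work, so the one nontrivial step is left open. First, injectivity of $d$ on all of $[N]^{<\omega}\setminus\{\varnothing\}$ is simply false for this recursion, even in the commutative case, where your reindexing changes nothing. Take $\mathbb{Z}\times\{0,1\}$, with addition in the first coordinate and $\max$ in the second; it is commutative and $2$-left cancellative. The choices $h_0=(1,0)$, $h_1=(1,1)$, $h_2=(5,1)$ avoid $D_n\cup E_n$ at every stage, yet $d(\{0,2\})=d(\{1,2\})=(6,1)$. In particular, ``left cancellation of the common prefix'' is not available: for $L\geq 2$ the hypothesis only bounds the number of solutions of $e+g=f$, and does not give $p+X=p+Y\Rightarrow X=Y$. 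Second, in the non-commutative case prepending makes matters worse. The residual equation becomes $h_n+e=f$, with the unknown on the \emph{left}. This is not of the form $e+g=f$ that $D_n$ excludes, and its number of solutions is not controlled by $L$-left cancellativity. For instance, $Z\times\mathbb{Z}$ with $(z,a)+(z',a')=(z',a+a')$ is $1$-left cancellative, but $g+e=f$ has $|Z|$ solutions. Building in the order $h_3,h_2,h_1,h_0$ (say $N=4$), you may legally take $h_3=(z,1)$, $h_2=(z,5)$, $h_1=(z',0)$, $h_0=(z'',-4)$ with $z,z',z''$ distinct. Then $d(\{0,2\})=h_3=d(\{3\})$, a collision between two members of a block sequence.

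The missing observation is that you never need $d$ to be injective. The definition of $G\to(\kappa)^{\operatorname{FS}}_\theta$ only requires $\vec{x}$ to be injective, i.e.\ $d(b_i)\neq d(b_j)$ for $i<j$, and distinct members of a block sequence have distinct maxima. With your \emph{original} appending construction, let $n=\max b_j>\max b_i$. Then $d(b_j)=d(b_j\setminus\{n\})+h_n$ with $d(b_j\setminus\{n\})\in E_n\cup\{0\}$ and $d(b_i)\in E_n$, so $d(b_i)=d(b_j)$ would put $h_n\in D_n$. No common tail ever has to be cancelled; ruling out exactly this is what the paper's sets $D_n$ accomplish. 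With that replacement your proof is complete and essentially the paper's. The paper additionally splits off the case where some $\langle g\rangle$ has more than $R$ elements, handled by finite Hindman on $\{g,2g,\dots,Rg\}$. Its remaining case never uses the case hypothesis, however, so your single-case argument suffices.
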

\begin{proof}
    First, use a compactness argument with Hindman's original theorem on $\mathbb N$ to obtain a finitary version of the finite-unions version of Hindman's theorem: given finite $\theta$ and $\kappa$, there exists some natural number $F$ such that, for every colouring of $\wp(F)\setminus\{\varnothing\}$ in $\theta$ colours, there is a block sequence $B\subseteq\wp(F)\setminus\{\varnothing\}$ of size $\kappa$ such that $\operatorname{FU}(B)$ is monochromatic. Similarly (or by mapping each finite $a\subseteq\mathbb N$ to the number $\sum_{i\in a}2^i$), we obtain a finitary version of the finite-sums version of Hindman's theorem: for each finite $\theta$ and $\kappa$, there is a number $R$ (it suffices to take $R=2^{F}-1$ where $F$ is as in the previous statement) such that, for every colouring of $\{1,\cdots,R\}$ in $\theta$ colours, there is $M\subseteq R$ of size $\kappa$ such that $\operatorname{FS}(M)$ is monochromatic.
    
    Now, given finite $\kappa, \theta$ and $L\geq1$, consider the numbers $F, R$ as described in the previous paragraph, and fix $S:=\max\{R+1, 4^{F}(L+1)\}$.
    Consider any $L$-left cancellative semigroup $G$ with $|G|\geq S$ and $c:G\longrightarrow\theta$. Letting $\langle g\rangle$ denote the monogenic subsemigroup generated by $g\in G$, we observe that the following two cases are exhaustive:
    \begin{description}
        \item[$\exists g\in G:|\langle g\rangle|>R$]
            In this case, the set $\{1, 2, \cdots, R\}$ is in bijection with $\{g, 2g, \cdots, Rg\}$; furthermore, addition is preserved whenever the sum stays within the set. Thus, it suffices to consider the induced colouring on $\{1,\cdots,R\}$ given by the restriction $c\upharpoonright\{g,2g,\cdots,Rg\}$, and apply the finite version of Hindman's theorem.
        \item[$\forall g\in G:|\langle g\rangle|\leq R$] 
            For all $n<F$, recursively define:
            \begin{align*}
                E_{n}:=&\operatorname{FS}(h_{j}\mid j<n),\\
                D_{n}:=&\{g\in G\mid\exists e\in E_{n}\cup\{0\}\;\exists f\in E_{n}\;(e+g=f)\}.
            \end{align*}
            The $L$-left cancellativity of $G$ implies that:
            \begin{align*}\;
                |D_{n}\cup E_{n}|\leq
                |E_{n}\cup\{0\}||E_n|L+|E_n|<
                4^{n}L+2^{n}<
                4^{n}(L+1),
            \end{align*}
            therefore there is at least one $h_{n}\in G\setminus(D_{n}\cup E_{n})$ whenever $n<F$.
            Now consider the map $d:\wp(F)\setminus\{\varnothing\}\to G$ given by $d(f)=\sum_{i\in f}h_{i}$ (ordered by index) and apply the finite finite-unions theorem to $c\circ d$, so that there exists a block sequence $B=(b_{i}\mid i<\kappa)$ in $\wp(F)\setminus\{\varnothing\}$ such that $\operatorname{FU}(B)$ is $c\circ d$-monochromatic. This readily implies that, if we let $M:=(d(b_{i})\mid i<\kappa)$, $\operatorname{FS}(M)$ is $c$-monochromatic. \qedhere
    \end{description}
\end{proof}




The remainder of this subsection is devoted to the study of $\Delta$-systems and the $\Delta$-system lemma, a combinatorial device that will be crucial in what follows.
    
\begin{definition}\label{def:delta-system}
A collection of sets $D$ is a {\bf $\Delta$-system} if there exists a set $R$ (called the {\bf root} of the $\Delta$-system) such that $c\cap d=R$ for any two distinct $c,d\in D$.
\end{definition}

A stronger version of the following lemma is well-known in the $\zfc$ context \cite{ErdosRado1960}; here we make explicit a weaker version that only requires $\dc$.

\begin{lemma}[$\dc$]\label{lem:delta-system-dc}
Let $C$ be an uncountable collection of finite sets. Then there exists an infinite $\Delta$-system $D\subseteq C$.
\end{lemma}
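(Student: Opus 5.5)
Under $\dc$ every infinite set is Dedekind-infinite, and for an uncountable set even more is available, so the plan is the classical inductive proof of the infinite $\Delta$-system lemma, with choice used only through $\dc$. Note that ``uncountable'' in $\zf$ just means there is no injection of $C$ into $\omega$.

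\emph{First step: reduce to sets of a fixed size.} Partition $C=\bigcup_{n<\omega}C_n$ with $C_n=\{c\in C\mid |c|=n\}$. If every $C_n$ were countable, we could not conclude that $C$ is countable without countable choice. Since $\dc$ implies countable choice, we may choose enumerations of the $C_n$, and $C$ would then be countable, a contradiction. So some $C_n$ is uncountable; fix the least such $n$. Since the desired conclusion only needs an infinite $\Delta$-system, it will also suffice to prove the following statement by induction on $n$: \emph{every family $C$ of $n$-element sets that is not finite-rank-bounded in the sense below contains an infinite $\Delta$-system}. In fact it is cleaner to prove directly that every \emph{infinite} family of $n$-element sets contains an infinite $\Delta$-system. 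Under $\dc$ this is true: a $\Delta$-system with empty root is just a pairwise disjoint family, and the argument below works for infinite families.

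\emph{Induction on $n$.} For $n=0$ an infinite family of $0$-element sets is impossible, so the statement holds vacuously. For $n=1$ the singletons are pairwise disjoint. For the inductive step, let $C$ be an infinite family of $n$-element sets. We split into two cases.

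\begin{description}
\item[Case 1: some point $p$ lies in infinitely many members of $C$.] Apply the induction hypothesis to $\{c\setminus\{p\}\mid p\in c\in C\}$, which is an infinite family of $(n-1)$-element sets, to get an infinite $\Delta$-system $D'$. Then $\{d\cup\{p\}\mid d\in D'\}$ is a $\Delta$-system in $C$, whose root is the root of $D'$ together with $p$.
\item[Case 2: every point lies in only finitely many members of $C$.] Then for each finite $F\subseteq\bigcup C$, only finitely many $c\in C$ meet $F$. Using $\dc$ (or just countable choice together with the Dedekind-infiniteness of $C$), build $c_0,c_1,\dots$ recursively, with $c_{k}\in C$ disjoint from $c_0\cup\dots\cup c_{k-1}$. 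Such a $c_k$ always exists, since only finitely many members meet that finite union and $C$ is infinite. This gives an infinite pairwise disjoint family, i.e.\ a $\Delta$-system with empty root.
\end{description}

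To make the induction legitimate we need the induction hypothesis uniformly in $n$. This is fine, since each case uses only finitely many applications of $\dc$-style recursion. The choice of $p$ in Case 1 needs no choice: we only need existence, and we then pick one such $p$.

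\emph{Main obstacle.} The delicate point is the first step, where countable choice is needed to conclude that some $C_n$ is infinite (indeed uncountable). Without it, a countable union of finite sets need not be countable, and $C$ could be uncountable while each $C_n$ is finite. Under $\dc$, countable choice is available and every infinite set is Dedekind-infinite, so this step goes through. The recursion in Case 2 is the other place where $\dc$ is genuinely used. I would present it as a $\dc$ application to the relation ``$s\mapsto s^\frown c$ with $c$ disjoint from $\bigcup s$'' on finite disjoint sequences from $C$, which is total by the Case 2 hypothesis.
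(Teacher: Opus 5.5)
Your proof is correct, and its skeleton is the paper's. Countable choice (from $\dc$) gives an $n$ with $C_n$ uncountable. An induction on $n$ then splits into two cases: if some point lies in many members, remove it and apply the induction hypothesis; if every point lies in few members, build a pairwise disjoint sequence by a $\dc$ recursion.

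The difference is the threshold between ``many'' and ``few.'' The paper keeps \emph{uncountable} as the inductive hypothesis, so its cases are ``uncountably many'' versus ``at most countably many.'' In its second case it must argue that all but countably many members avoid a given finite set. You weaken the hypothesis to arbitrary \emph{infinite} families of $n$-element sets, so your cases are ``infinitely many'' versus ``finitely many.''

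This buys you two things. First, a stronger intermediate result: the classical infinite $\Delta$-system lemma for infinite families of $n$-element sets. Second, a simpler Case 2: the members meeting a finite set form a finite union of finite sets, which is already finite in $\zf$. As a result, uncountability and countable choice enter only in the reduction step, and there you really only need some $C_n$ to be infinite. You could go further: $\dc$ makes $C_n$ Dedekind-infinite, so you can pass to an enumerated countably infinite subfamily and always take least indices, and the whole induction becomes choice-free. The paper's version instead stays closer to the $\zfc$ proof of the uncountable $\Delta$-system lemma, at the price of handling unions of countable sets inside Case 2.

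Two editorial points. Delete the sentence about families that are ``not finite-rank-bounded in the sense below,'' since it refers to nothing. Your worry about needing the induction hypothesis ``uniformly in $n$'' is unfounded: this is ordinary induction on $\omega$ for a single formula, and $\dc$ can be invoked freely inside each step.
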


\begin{proof}
    Since $\dc$ implies (the Axiom of Countable Choice, which in turn implies) that countable unions of countable sets are countable, we deduce that there must be an $n\in\mathbb N$ such that uncountably many elements of $C$ have cardinality $n$. So we may assume without loss of generality that every element of $C$ has the same cardinality $n$, and from here one can develop the usual proof of the lemma by induction on $n$; we sketch the proof for the benefit of the reader, and only explain the full details in the parts where the use of $\dc$ needs to be made explicit.

    The base of the induction, i.e., the case $n=1$, is immediate (in this case $C$ is already a pairwise disjoint family, that is, a $\Delta$-system with empty root). Now, if we assume $n>1$, there are two cases, the easy one being if there exists an $r$ such that $r\in c$ for uncountably many $c\in C$ (in this case, apply the induction hypothesis to obtain an infinite $\Delta$-system $D'\subseteq\{c\setminus\{r\}\big|r\in c\in C\}$, and simply let $D=\{d\cup\{r\}\big|d\in D'\}$). The more involved case is when, for every $r$, there are at most countably many $c\in C$ with $r\in C$. This readily implies that, for every finite (or even countably infinite) subset $F\subseteq C$, all but countably many elements of $C$ are disjoint with every element of $F$ (in case $F$ is countably infinite, one would use again $\dc$ to support this claim). Hence, using $\dc$, one can recursively find an infinite sequence of elements $c_m\in C$ with each $c_m$ disjoint from all of the $c_k$ for $k<m$, so that $D=\{c_m\big|m<\omega\}\subseteq C$ is an infinite $\Delta$-system (with empty root).
\end{proof}

The importance of Lemma \ref{lem:delta-system-dc} lies in its facilitation of several structural arguments over $\mathbb Q$-vector spaces. The attentive reader will note that, in case $C$ is assumed to be a collection of subsets of some ordinal number, one does not need the use of $\dc$ (since one is able to always choose minimums when necessary) and, in fact, the usual $\Delta$-system lemma, where one obtains an uncountable $\Delta$-system, holds in this context. However, we will be concerned primarily with the case where the elements of $C$ are fully arbitrary.

\subsection{Negative results}\label{subsec:hindman-failures}

We now show some failures of Hindman-type statements, especially regarding the uncountable-finite configuration. In the $\zfc$ context, these failures were proved in \cite{Fernandez2018}; with some much stronger results established in \cite{FernandezRinot2017}. Similar results for the infinite-infinite configuration were established in \cite[Theorem 12]{FernandezLee2020}. The first surprising result is that, for the additive group of  $\mathbb R$, the same negative statements can be proved in $\zf$.

\begin{theorem}\label{thm:hindman-R-fails}
$\mathbb{R}\not\to(2)^{\operatorname{FS}}_{\omega}$.
\end{theorem}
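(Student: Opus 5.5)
The plan is to exhibit an explicit colouring $c:\mathbb{R}\longrightarrow\omega$, defined without any choice, that admits no pair of distinct reals $x_0\neq x_1$ with $x_0$, $x_1$ and $x_0+x_1$ all of the same colour. Such a pair is exactly what $\mathbb{R}\to(2)^{\operatorname{FS}}_{\omega}$ would provide: for an injective sequence $\vec{x}=(x_0,x_1)$ we have $\operatorname{FS}(\vec{x})=\{x_0,x_1,x_0+x_1\}$. Because the colouring will be given by an explicit formula, the argument takes place in $\zf$ alone; no basis, $\Delta$-system or Lemma \ref{lem:delta-system-dc} is needed.

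The colouring records the sign and the dyadic order of magnitude. Let $c(0)$ be a special colour. For $x\neq 0$, let $c(x)$ be the pair $(\operatorname{sgn}(x),\lfloor\log_2|x|\rfloor)\in\{-1,1\}\times\mathbb{Z}$. Since $\{0\}\cup(\{-1,1\}\times\mathbb{Z})$ is countable via an explicitly definable bijection, this yields a colouring into $\omega$.

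Now suppose $x_0\neq x_1$ and $\{x_0,x_1,x_0+x_1\}$ is monochromatic. The colour class of $0$ is the singleton $\{0\}$, so the common colour cannot be $c(0)$, since $x_0\neq x_1$. Hence $x_0$ and $x_1$ are nonzero, have the same sign, and satisfy $2^k\le|x_0|,|x_1|<2^{k+1}$ for a single $k\in\mathbb{Z}$. Having the same sign, their sum satisfies $|x_0+x_1|=|x_0|+|x_1|\ge 2^{k+1}$. Therefore $\lfloor\log_2|x_0+x_1|\rfloor\ge k+1$, so $c(x_0+x_1)\neq c(x_0)$, which is a contradiction.

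There is no real obstacle here. The only points requiring care are handling $0$ through its own colour, which uses the injectivity built into Definition \ref{def:hindman-property}, and observing that the colouring is explicitly defined, so no appeal to choice arises.
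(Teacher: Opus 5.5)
Your proof is correct, and it rests on the same idea as the paper's: colour by dyadic order of magnitude, so that two distinct reals of the same sign whose absolute values lie in the same interval $[2^k,2^{k+1})$ have a sum lying in a strictly higher interval.

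Where you differ is the colouring, and the difference favours you. The paper uses a single integer-valued colouring: $c(r)=k$ for $r\in[2^k,2^{k+1})$ with $k\in\mathbb{Z}$, $c(-r)=-c(r)$, and $c(0)=0$. It then says one may assume $r,s>0$. As written, that colouring merges classes of opposite sign, and it puts $0$ in the same class as $[1,2)$ and $(-2,-1]$. For example, $c(3/2)=c(-3/2)=c(0)=0$ and $c(3)=c(-3/4)=c(9/4)=1$. So $\operatorname{FS}(\{3/2,-3/2\})$ and $\operatorname{FS}(\{3,-3/4\})$ are both monochromatic, and the reduction to $r,s>0$ is not justified.

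Your version records the sign explicitly and gives $0$ a singleton class. That is exactly what makes the argument go through. Your use of the injectivity of $\vec{x}$ to rule out the colour of $0$ is the right way to dispose of that case. Since everything is given by explicit formulas, the $\zf$ claim is also fine.
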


    \begin{proof}
        Let $c:\mathbb{R}\longrightarrow\mathbb{Z}$ be given by
        \begin{equation*}
            c(r)=\begin{cases}
                k&\text{if }r\in[2^{k},2^{k+1}),\\
                0&\text{if }r=0,\\
                -k&\text{if }r\in(-2^{k+1},-2^{k}].\\
            \end{cases}
        \end{equation*}
        Suppose that $r,s\in\mathbb{R}$ are distinct and $c(r)=c(s)$. We will show that $c(r+s)\neq c(r)$. One may assume without loss of generality that $r,s>0$. Set $c(r)=c(s)=k\in\mathbb{Z}$, then $2^{k}\leq r,s<2^{k+1}$, implying $2^{k+1}\leq r+s<2^{k+2}$. Hence $c(r+s)=k+1$.
    \end{proof}

As a corollary of the previous result, we obtain the following negative result for uncountable monochromatic sets over $\mathbb R$; a particular case of \cite[Theorem 5]{Fernandez2018} in $\zfc$, but somewhat surprising in the $\zf$ context.

\begin{corollary}\label{hindman-on-R-is-countable}    $\mathbb{R}\not\to(\mathrm{uncountable})^{\operatorname{FS}}_{2}$.
\end{corollary}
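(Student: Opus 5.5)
We will derive the corollary from Theorem~\ref{thm:hindman-R-fails} by collapsing its $\omega$ colours into $2$. Since $c$ takes values in $\mathbb Z$, a countable set, an injective uncountable sequence $\vec x$ with $\operatorname{FS}(\vec x)$ monochromatic for a $2$-colouring must contain uncountably many terms on which $c$ is constant. This pigeonhole step needs no choice: the map $\xi\mapsto c(x_\xi)$ sends an uncountable set into $\mathbb Z$, and if every fibre were countable, then, because the index set is a well-ordered cardinal $\kappa\geq\omega_1$, the fibres are subsets of an ordinal and so come with canonical enumerations. A union of countably many such fibres is then countable without using $\ac$. Hence some $k\in\mathbb Z$ has an uncountable fibre $I_k=\{\xi<\kappa\mid c(x_\xi)=k\}$.

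The $2$-colouring is built from $c$ as follows. We cannot simply colour by the parity of $c$, because $r+s$ lands in colour $k+1$, and parity certainly differs, but the monochromatic set must also contain sums of three or more terms. So we work more directly. Take two indices $\xi<\eta$ in $I_k$. If $k>0$, or more generally if $x_\xi,x_\eta$ share a sign, then $c(x_\xi+x_\eta)=c(x_\xi)\pm1$ by the proof of Theorem~\ref{thm:hindman-R-fails}. Define $d:\mathbb R\to 2$ by $d(r)=c(r)\bmod 2$, that is, $d(r)=|c(r)|\bmod 2$, with $d(0)=0$. For $x_\xi,x_\eta$ of the same colour $k$ and the same sign, the sum has $|c|=|k|+1$, so $d(x_\xi+x_\eta)\neq d(x_\xi)$, and monochromaticity of $\operatorname{FS}$ fails. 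The main obstacle is the bookkeeping of signs and of the value $0$. The sign of $r$ is determined by $c(r)$ except at the ambiguity $c=0$, which covers $r=0$ and $r\in[1,2)$; also the range $(-2,-1]$ gives $-0=0$. We remove this overlap by redefining $c$ to take values in $\mathbb Z\times\{+,-\}\cup\{0\}$ and set $d(r)=$ (first coordinate $\bmod 2$). This is still a countable colouring, so the pigeonhole step still yields two terms $x_\xi\neq x_\eta$ with equal exponent and equal sign, and neither can be $0$ since at most one term equals $0$ by injectivity.

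To conclude, these two terms satisfy $2^k\leq |x_\xi|,|x_\eta|<2^{k+1}$ with equal signs. Hence $2^{k+1}\leq|x_\xi+x_\eta|<2^{k+2}$, so the exponent shifts by exactly one and $d$ flips. Both $x_\xi$ and $x_\xi+x_\eta$ lie in $\operatorname{FS}(\vec x)$, so $\operatorname{FS}(\vec x)$ is not $d$-monochromatic. In fact the argument only uses that $\vec x$ has more terms than a countable pigeonhole allows, so it goes through for any well-ordered uncountable $\kappa$. It also covers the reading of ``uncountable'' as $\omega_1$, since a sequence indexed by $\omega_1$ is all that was used.
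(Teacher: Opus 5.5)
Your colouring and your closing computation agree with the paper's: colour by the parity of $\lfloor\log_2|r|\rfloor$, and note that two distinct terms with the same exponent and the same sign sum to an element whose exponent is one higher. However, your pigeonhole step is justified by a claim that is false in $\zf$. A countable set of ordinals has a canonical enumeration only in its order type $\alpha_k<\omega_1$. To conclude that $\bigcup_k I_k$ is countable, you still have to pick bijections $\alpha_k\to\omega$ for infinitely many $k$ at once, which is a use of countable choice. In fact, in the Feferman--L\'evy model $\omega_1$ has countable cofinality. If $(\alpha_n)_{n<\omega}$ are countable ordinals cofinal in $\omega_1$, then $\xi\mapsto\min\{n\mid\xi<\alpha_n\}$ maps $\omega_1$ into $\omega$ with every fibre countable. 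So ``some $k$ has an uncountable fibre'' is not a theorem of $\zf$.

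The repair is to ask for less. You only need two distinct terms with the same (exponent, sign), and an uncountable set admits no injection into the countable set $\mathbb Z\times\{+,-\}\cup\{0\}$. This is exactly the paper's step (``$c$ cannot be one-to-one on $X$''). It also removes your restriction to well-ordered index sets: it applies to an arbitrary uncountable $X\subseteq\mathbb R$. That is what the paper proves, and it is what matters in its setting, since under $\ad$ there is no injection of $\omega_1$ into $\mathbb R$, so the well-ordered reading of the corollary is vacuous there.

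On signs, your refinement is genuinely necessary, but your diagnosis of why is wrong. The colouring $c$ fails to determine the sign well away from $c=0$: for example $c(10)=c(-0.2)=3$, so ``$k>0$'' does not force a common sign, and $\{10,-0.2,9.8\}$ is monochromatic both for $c$ and for the parity colouring $d$. Pigeonholing on (exponent, sign), as you end up doing, handles this. The paper's own write-up takes $r\neq s$ with $c(r)=c(s)$ and then treats only the cases $r,s>0$ and $r,s<0$, so it glosses over exactly this point; here your version is the more careful one.

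Finally, your worry that parity fails because $\operatorname{FS}(\vec x)$ also contains longer sums is unfounded. A single pair of elements of $\operatorname{FS}(\vec x)$ with different colours refutes monochromaticity, and both your argument and the paper's use only $x_\xi$ and $x_\xi+x_\eta$.
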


\begin{proof}
    Consider the colouring $c$ as in the previous theorem and, given $r\in\mathbb R$, let $f(r)$ be the absolute value of $c(r)$, that is, $f(r)=k$ if $|r|\in[2^{k},2^{k+1})$ and $f(0)=0$. We now define the colouring $d:\mathbb R\longrightarrow 2$ by letting $d(r)\equiv f(r)\mod 2$. Suppose that $X\subseteq\mathbb R$ is an uncountable set, whose elements have the same colour under $d$. Since $X$ is uncountable, the  function $c$ cannot be one-to-one on $X$, so there are two distinct $r,s\in X$ such that $c(r)=c(s)$. Proceeding by cases as in the previous theorem, for $r,s>0$,\linebreak we conclude $c(r+s)=c(r)+1$, thus $f(r+s)=f(r)+1$ and so $d(r+s)\neq d(r)$; the case for $r,s<0$ follows analogously.
\end{proof}

We now proceed to prove a seemingly more general version of the previous result; bear in mind, however, that we add the extra assumption of the existence of a basis (in $\zf$, one cannot guarantee that $\mathbb R$ has a basis as a $\mathbb Q$-vector space). The proof is an adaptation of \cite[Theorem 5]{Fernandez2018} to the $\zf$ context; the key to the argument lies in the algebraic structure of $\mathbb{Q}$-vector spaces, as their cancellation properties prevent the stabilization phenomena required for uncountable finite-sums configurations.

\begin{theorem}[$\dc$]\label{thm:hindman-fails-uncountable}
If $V$ is a $\mathbb{Q}$-vector space with basis, then $V\nrightarrow(\mathrm{uncountable})^{\operatorname{FS}}_{2}$.
\end{theorem}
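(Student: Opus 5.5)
Fix a basis $\mathcal B$ of $V$. Every $v\in V\setminus\{0\}$ has a finite support $\operatorname{supp}(v)\subseteq\mathcal B$ and coordinates $v_b\in\mathbb Q$ for $b\in\mathcal B$. In $\zfc$, the argument of \cite[Theorem 5]{Fernandez2018} colours $v$ according to the leading coefficient with respect to a well-order of the basis. Without a well-order this is unavailable, so I will use a colouring that treats the support symmetrically. Since $\mathcal B$ is not well-ordered, the colouring must be definable from $\mathcal B$ alone.

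\textbf{The colouring.} For $v\neq 0$, sort the nonzero coordinates by their rational values. Let $m(v)$ be the number of $b\in\operatorname{supp}(v)$ at which $|v_b|$ attains its maximum $M(v)=\max_b|v_b|$. Write $M(v)\in[2^k,2^{k+1})$ and put $f(v)=k$, as in Theorem~\ref{thm:hindman-R-fails}. Define
\[
d(v)\equiv \bigl(f(v)+\lfloor\log_2 m(v)\rfloor\bigr)\bmod 2 .
\]
This expression is only a template; the precise combination will be dictated by the case analysis below. The colouring uses only $\mathcal B$, so no choice is involved.

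\textbf{Reduction with $\dc$.} Let $X\subseteq V$ be uncountable with $\operatorname{FS}(X)$ monochromatic. By $\dc$ (countable unions of countable sets are countable), we may shrink $X$ to an uncountable set on which $|\operatorname{supp}(x)|=n$, $f(x)=k$, the multiset of coordinate values is fixed, and $M(x)$ is fixed. The last two are possible because there are only countably many finite multisets of rationals. Next, apply Lemma~\ref{lem:delta-system-dc} to the supports to obtain infinitely many distinct $x_0,x_1,x_2,\dots\in X$ whose supports form a $\Delta$-system with root $R$. Passing to a further infinite subsequence, using the pigeonhole principle and $\dc$ countably many times, we may assume that for every $b\in R$ the value $(x_i)_b$ is independent of $i$. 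This is possible because $R$ is finite.

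\textbf{Case analysis.} Since the $x_i$ are distinct, their tails $\operatorname{supp}(x_i)\setminus R$ are nonempty and pairwise disjoint. Now compare $x_0$, $x_0+x_1$, $x_0+x_1+x_2$, and so on.

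In the first case, the maximum absolute coordinate of $x_0$ is attained at some $b\in R$. Then $(x_0+x_1)_b=2(x_0)_b$, so $M$ doubles and $f$ increases by exactly $1$. Meanwhile, all coordinates outside $R$ keep their values, and coordinates in $R$ are doubled. The number of maximal coordinates is then $m(x_0+x_1)=|\{b\in R: |(x_0)_b|=M\}|$, which does not depend on how many summands we add beyond the first. Summing $j$ terms multiplies the root part by $j$, so comparing $x_0+x_1$ (which has $f=k+1$) with $x_0$ (which has $f=k$) forces a colour change, since $m$ is the same for both.

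In the second case, the maximum is attained only outside $R$. Then $M(x_0+x_1)=M$, so $f$ does not change, and $m(x_0+x_1)=2m(x_0)$. Hence $\lfloor\log_2 m\rfloor$ increases by exactly $1$, which again changes the colour.

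\textbf{Main obstacle.} The difficulty is the mixed situation, where the maximum is attained both in $R$ and in the tails. In that situation doubling the root part still raises $M$ and resets $m$ to its root-only count, so it falls under the first case. However, the root coefficients can have signs that make a root coordinate $0$ only when negatives appear with differing values. This cannot happen here, because all root values are fixed and equal across the $x_i$.

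So the real check is that the colour template is consistent across both cases: in each case exactly one of $f$ or $\lfloor\log_2 m\rfloor$ moves by $1$ while the other stays fixed. If that fails, I would instead use two independent parities and a $4$-colouring, and then collapse it to $2$ colours by XOR, exactly as $d$ was derived from $c$ in Corollary~\ref{hindman-on-R-is-countable}.
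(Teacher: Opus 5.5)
\textbf{There is a genuine gap: your case analysis is wrong in both cases.} The $\dc$ reduction and the $\Delta$-system step are essentially fine. Two small repairs are needed there. The \emph{supports} must be pairwise distinct, which holds because, once the coefficient multiset is fixed, $x\mapsto\operatorname{supp}(x)$ is finite-to-one. Also, non-empty tails follow from distinct supports, not from distinct vectors. The real problem is that you only compare $x_0$ with $x_0+x_1$.

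In Case 1 you claim $m$ is the same for $x_0$ and $x_0+x_1$. In the mixed situation, however, $m(x_0)=m_R+m_T$ (root maximizers plus tail maximizers) while $m(x_0+x_1)=m_R$. For instance, take $x_i=b+t_i$ with $R=\{b\}$. Then $x_0$ has $M=1$, $m=2$, and $x_0+x_1$ has $M=2$, $m=1$. So $f$ rises by one, $\lfloor\log_2 m\rfloor$ drops by one, and $d(x_0)=d(x_0+x_1)=1$.

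In Case 2 you claim $M(x_0+x_1)=M$ and $m(x_0+x_1)=2m(x_0)$. But the doubled root coordinates can overtake or tie the tail maximum. For $x_i=\tfrac32 b+2t_i$ you get $M=2$, $m=1$ against $M=3$, $m=1$, so the colour is again unchanged. For $x_i=b+b'+2t_i$, the count $m$ jumps from $1$ to $4$, so $\lfloor\log_2 m\rfloor$ moves by two.

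Your fallback does not repair this. XOR-ing the two parities of the $4$-colouring is literally $d$ again. The failure is exactly that simultaneous moves of $f$ and $\lfloor\log_2 m\rfloor$ cancel under XOR.

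\textbf{How to fix it.} The template colouring does work, but the argument must be restructured. Split on whether $R$ is empty, and use long sums $s_j=x_0+\dots+x_{j-1}$.
\begin{itemize}
\item If $R=\emptyset$, then $x_0$ and $x_0+x_1$ have the same $M$ while $m$ exactly doubles, so the colour flips.
\item If $R\neq\emptyset$, let $M_R=\max_{b\in R}|(x_0)_b|>0$, attained at $m_R$ points of $R$, and choose $j$ with $jM_R>M$. Then $s_j$ and $s_{2j}$ both have $m=m_R$, with maxima $jM_R$ and $2jM_R$. So only $f$ moves, by exactly one.
\end{itemize}
You already noticed that $m(s_j)$ stops changing once root coordinates dominate; the comparison just has to be made among such sums rather than against $x_0$.

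\textbf{Comparison with the paper.} Even repaired, your route is more delicate than the paper's. The paper colours by $\lfloor\log_2|\operatorname{supp}(v)|\rfloor\bmod 2$. The support sizes of $j$-fold sums are then $|R|+j(N-|R|)$, an arithmetic progression with positive difference. A single computation finishes the proof, with no case split on where the large coefficients sit.
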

    \begin{proof}
        Let $\mathcal{B}$ be a basis of $V$, so that $V\cong\bigoplus_{b\in\mathcal{B}}\mathbb{Q}$. Let us define $\operatorname{supp}:V\to[\mathcal{B}]^{<\omega}$ as follows: given $v\in V$, write $v=\sum_{b\in\mathcal{B}}q_{b}b$ and let $\operatorname{supp}(v)=\{b\in\mathcal{B}\mid q_{b}\neq0\}$. Then the colouring $c:V\longrightarrow 2$ is defined by letting $c(v)\equiv\lfloor\log_{2}|\operatorname{supp}(v)|\rfloor\mod 2$. Proceeding by contradiction, suppose $X\subseteq V$ is an uncountable set such that $\operatorname{FS}(X)$ is monochromatic, say in colour $i$. Since we are assuming $\dc$, and therefore countable unions of countable sets are countable, there must be an $N\in\mathbb N$ such that for uncountably many $v\in X$ we have $|\operatorname{supp}(v)|=N$, so assume without loss of generality that this holds of all $v\in V$. Similarly, letting for a $v=\sum_{b\in\mathcal{B}}q_{b}b\in V$, $\operatorname{coef}(v)=\{q_{b}\big| b\in\mathcal{B}\}\setminus\{0\}$, an analogous reasoning allows us to assume, without loss of generality, that there is a fixed $F\in[\mathbb{Q}\setminus\{0\}]^{\leq N}$ such that $(\forall v\in X)(\operatorname{coef}(v)=F)$. Once again we use that countable unions of countable sets are countable to notice that $\{\operatorname{supp}(v)\big|v\in X\}$ is an uncountable set, so we may use Lemma \ref{lem:delta-system-dc} together with $\dc$ to get a sequence $\{v_{n}|n<\omega\}\subseteq X$ with pairwise distinct supports such that $\{\operatorname{supp}(v_n)\big|n<\omega\}$ forms a $\Delta$-system, say with root $R$. Since there are at most $|R|^{|F|}$ (a finite number) coefficients for the $v_{n}$ in the positions of the root $R$, by the pigeonhole principle, we may assume that the coefficients of the $v_{n}$ in $R$ are constant, guaranteeing that when adding vectors of $W$ the terms corresponding to $R$ do not cancel. This implies that, if $v_{n_{1}},\cdots,v_{n_{L}}\in X$ are distinct, then $\left|\operatorname{supp}\left(\sum_{i=1}^{L}v_{n_{i}}\right)\right|=|R|+L(N-|R|)$. So the cardinalities of the supports of elements of $\operatorname{FS}(X)$ form an infinite arithmetic progression (with base $N$ and difference $N-|R|$); letting $m$ be such that $2^{m}\leq N<2^{m+1}$ and $n\leq m$ be such that $2^{n}\leq N-|R|<2^{n+1}$, we see that 
        \begin{equation*} 
        2^{m+1}= 
        2^{m-n}2^{n}+2^{m}\leq 
        2^{m-n}(N-|R|)+N\leq 
        2^{m-n}2^{n+1}+2^{m+1}= 
        2^{m+2},
        \end{equation*}
        so that any finite sum of $2^{m-n}+1$ elements from $V$ must have a support with cardinality $|R|+(2^{m-n}+1)(N-|R|)=2^{m-n}(N-|R|)+N$. So if $v\in X$ is arbitrary, and $w\in\operatorname{FS}(X)$ is any sum with $2^{m-n}+1$ summands, we must have $c(v)\equiv m\mod 2$ and $c(w) \equiv m+1\mod2$, contradicting that $\operatorname{FS}(X)$ is monochromatic.
    \end{proof}

The preceding theorems show that the finite-sums phenomenon remains essentially countable in many $\mathbb{Q}$-vector spaces.

Moving on to the finite-infinite configuration, we record an additional finite failure which further illustrates the rigidity imposed by cancellation. A version of the following result was established in \cite[Theorem 5]{FernandezLee2020} in the $\zfc$ context; the proof below does not use $\ac$.

\begin{theorem}\label{thm:hindman-three-colors-fails}
Let $V$ be a $\mathbb{Q}$-vector space with basis. 
Then $V\not\to(3)^{\operatorname{FS}}_{\omega}$.
\end{theorem}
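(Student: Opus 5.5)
The plan is to colour each vector by a quadratic invariant computed from its coordinates in the given basis. This invariant carries enough algebraic information that monochromatic finite sums of three distinct vectors become impossible.

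Fix a basis $\mathcal{B}$ of $V$ and write each $v\in V$ uniquely as $v=\sum_{b\in\mathcal{B}}q_b(v)\,b$, where only finitely many coefficients are nonzero. Define the bilinear form
\[
\langle v,w\rangle=\sum_{b\in\mathcal{B}}q_b(v)q_b(w)
\]
(a finite sum) and the quadratic form $Q(v)=\langle v,v\rangle\in\mathbb{Q}$. The colouring is $c=e\circ Q$, where $e:\mathbb{Q}\to\omega$ is a fixed, explicitly definable bijection. No choice is used: everything is defined from $\mathcal{B}$ and coordinates are unique. The form is positive definite, so $Q(v)=0$ if and only if $v=0$. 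It is symmetric and bilinear, so $Q(v+w)=Q(v)+Q(w)+2\langle v,w\rangle$.

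Suppose, towards a contradiction, that $x,y,z$ are pairwise distinct and $\operatorname{FS}(x,y,z)$ is $c$-monochromatic. Since $e$ is injective, $Q$ takes a single value $a$ on all seven finite sums.
\begin{itemize}
\item From $Q(x+y)=Q(x)+Q(y)+2\langle x,y\rangle$ we get $a=2a+2\langle x,y\rangle$, so $\langle x,y\rangle=-a/2$. The same holds for the pairs $x,z$ and $y,z$.
\item Expanding the triple sum gives $Q(x+y+z)=3a+2\cdot 3\cdot(-a/2)=0$.
\item Monochromaticity then forces $a=Q(x+y+z)=0$, so $Q(x)=Q(y)=Q(z)=0$.
\item Positive definiteness gives $x=y=z=0$, contradicting distinctness.
\end{itemize}
Hence no injective $3$-sequence has monochromatic finite sums, that is, $V\not\to(3)^{\operatorname{FS}}_{\omega}$.

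The main step to get right is choosing an invariant that is determined by the colour, is countably valued, and interacts algebraically with addition. A linear invariant such as the sum of coefficients only forces that invariant to vanish, which is not enough for a contradiction. The quadratic form works because its cross terms are forced to take a single value, and positive definiteness then converts $Q=0$ into $v=0$. Every remaining step is a direct calculation and is choice-free.
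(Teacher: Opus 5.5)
Your proposal is correct and is essentially the paper's proof: the same colouring $v\mapsto\langle v|v\rangle$ from the basis inner product, the same derivation that each pairwise cross term equals $-\frac{1}{2}$ of the common colour, and the same conclusion that the triple sum gets colour $0$. The only differences are cosmetic: you compose with a bijection $\mathbb{Q}\to\omega$, and you derive $a=0$ at the end and then use positive definiteness, whereas the paper rules out colour $0$ at the start.
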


    \begin{proof}
        Given a basis $\mathcal{B}$ for $V$, we define the function $\langle\ |\ \rangle:V\times V\longrightarrow\mathbb Q$ by $\langle v|w\rangle=\sum_{b\in\mathcal B}x_b y_b$, where $v=\sum_{b\in\mathcal B}x_b b$ and $w=\sum_{b\in\mathcal B}y_b b$; all of these sums are well-defined because $x_b=y_b=0$ for all but finitely many $b\in\mathcal B$. It is hard not to see that the function just defined satisfies all the properties of an inner product; we now proceed to define the colouring $c:V\longrightarrow\mathbb{Q}$ by $c(v)=\langle v|v\rangle$. Suppose that there are three distinct vectors $u,v,w\in V$ such that $\operatorname{FS}(\{u,v,w\})$ is monochromatic, say in colour $q\in\mathbb Q$. Then we must have $q\neq 0$ and, in addition,
        \begin{equation*}
            q=\langle u|u\rangle=\langle u+v|u+v\rangle=\langle u|u\rangle+\langle v|v\rangle+2\langle u|v\rangle,
        \end{equation*}
        from where one can deduce that $\langle u|v\rangle=-\frac{1}{2}q$. In an entirely similar manner one shows that $\langle v|w\rangle=\langle u|w\rangle=-\frac{1}{2}q$, and therefore
        \begin{align*}
            \langle u+v+w|u+v+w\rangle
            &=\langle u|u\rangle+\langle v|v\rangle+\langle w|w\rangle+2\big(\langle u|v\rangle+\langle v|w\rangle+\langle u|w\rangle\big)\\
            &=3q+2\left(-\frac{3}{2}q\right)\\
            &=0,
        \end{align*}
        meaning in particular that $c(u+v+w)=0\neq q=c(u)$, a contradiction.
    \end{proof}

\subsection{Positive finite configurations from partition properties}\label{subsec:hindman-positives}

Despite the negative results above, strong partition properties available under determinacy yield positive finite-sums configurations in sufficiently large dimensions; although for this we do need to assume $\ad$. The proof below, reproduced here for the benefit of the reader, is a classic argument made many times over to deduce some small monochromatic finite-sums configurations from Ramsey-type properties, using the fact that $\ad$ ensures that $\omega_1$, $\omega_2$ have large-cardinal properties.

\begin{definition}\label{def:partition-relation}
Let $\lambda,\kappa,\iota$ and $\theta$ be cardinals. We write $\lambda\to(\kappa)^{\iota}_{\theta}$ for the statement:
``for every colouring of $[\lambda]^{\iota}$ into $\theta$ colours, there is a monochromatic subset of size $\kappa$,'' i.e.
\[
(\forall c:[\lambda]^{\iota}\to\theta)\;
(\exists M\in[\lambda]^{\kappa})\;
\bigl(|c[[M]^{\iota}]|=1\bigr).
\]
\end{definition}

\begin{theorem}\label{thm:measurable-partition}
Every measurable cardinal $\kappa$ is weakly compact \cite[Lemma 10.18]{Jech2003} and thus satisfies $\kappa\to(\kappa)^{\iota}_{\theta}$ for every $\iota<\omega$ and every $\theta<\kappa$ \cite[Theorem 7.8]{Kanamori2003}.
\end{theorem}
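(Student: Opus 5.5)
I will prove the partition relation directly from a measure; weak compactness in the partition sense, $\kappa\to(\kappa)^2_2$, is then the special case $\iota=\theta=2$. The point needing care is that the argument be choice-free. The plan is Rowbottom's argument: fix a $\kappa$-complete nonprincipal ultrafilter $U$ on $\kappa$ that is moreover \emph{normal}, i.e.\ closed under diagonal intersections $\triangle_{\alpha<\kappa}A_\alpha=\{\beta<\kappa\mid \beta\in\bigcap_{\alpha<\beta}A_\alpha\}$. I will show, by induction on $\iota<\omega$, the stronger statement that every colouring $c:[\kappa]^\iota\to\theta$ with $\theta<\kappa$ has a homogeneous set $H\in U$. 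Since $U$ is nonprincipal and $\kappa$-complete, every element of $U$ has size $\kappa$, so this gives $\kappa\to(\kappa)^\iota_\theta$. Everything will be indexed by ordinals below $\kappa$, so all ``choices'' are least-element choices and no form of $\ac$ is used.

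\textbf{Base case.} For $\iota=1$, the sets $c^{-1}(\{\xi\})$, $\xi<\theta$, partition $\kappa$ into fewer than $\kappa$ pieces. By $\kappa$-completeness exactly one of them lies in $U$, and that piece is the homogeneous set.

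\textbf{Inductive step.} Given $c:[\kappa]^{\iota+1}\to\theta$, for each $\alpha<\kappa$ define $c_\alpha:[\kappa\setminus(\alpha+1)]^{\iota}\to\theta$ by $c_\alpha(s)=c(\{\alpha\}\cup s)$. The induction hypothesis, applied to the restriction of $U$ to $\kappa\setminus(\alpha+1)$, gives some homogeneous $H_\alpha\in U$ with colour $\xi_\alpha$.
\begin{itemize}
\item Here a choice of $H_\alpha$ is needed uniformly in $\alpha$. To make it canonical, I will carry the induction in the form ``the homogeneous set in $U$ is unique modulo the colour.'' Concretely, for colourings of $[\kappa]^\iota$ the colour $\xi$ admitting a homogeneous set in $U$ is unique, since two such sets intersect in a set of size at least $\iota$. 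So $\xi_\alpha$ is well defined.
\item I then take $H_\alpha$ to be the \emph{union} of all $c_\alpha$-homogeneous sets of colour $\xi_\alpha$ that lie in $U$. This is not itself homogeneous in general, so instead I will prove the inductive statement in the form: the set $\{A\in U\mid A \text{ is } c_\alpha\text{-homogeneous}\}$ is nonempty. I then replace the choice of a member by diagonal intersection over a canonically defined family, as follows.
\end{itemize}
The cleanest route is to unfold the recursion completely. By recursion on $\iota$, define a canonical set $H(c)\in U$ as the diagonal intersection of the canonical sets $H(c_\alpha)$, intersected with the $\kappa$-completeness-chosen colour class of $\alpha\mapsto\xi_\alpha$. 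Then $H=\{\beta\in\triangle_\alpha H(c_\alpha)\mid \xi_\beta=\xi^*\}\in U$ by normality together with the base case. For $\alpha_0<\alpha_1<\dots<\alpha_\iota$ in $H$, each $\alpha_j$ with $j\geq1$ lies in $H(c_{\alpha_0})$. Hence $c(\{\alpha_0,\dots,\alpha_\iota\})=\xi_{\alpha_0}=\xi^*$, so $H$ is homogeneous.

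\textbf{Main obstacle.} I expect the main difficulty to be obtaining the \emph{normal} measure in $\zf$. In $\zfc$ one normalizes a $\kappa$-complete measure via the $<_U$-least function $f$ that is unbounded mod $U$, using Łoś's theorem for the ultrapower, and both steps may use choice. I will first try to avoid this by assuming normality as part of the hypothesis, which is exactly what is available in the intended application: under $\ad$, the club filter on $\omega_1$ is a normal measure (Solovay), and $\omega_2$ carries a normal measure, the $\omega$-cofinal club filter. Failing that, I would normalize by hand. Take $f:\kappa\to\kappa$ that is $<_U$-minimal among functions unbounded mod $U$; the minimal element exists without $\dc$, because one can minimize by the ordinal values on a fixed set in $U$, taking pointwise least candidates. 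Then set $f_*U=\{A\mid f^{-1}[A]\in U\}$ and check the diagonal intersection property directly from minimality, which avoids Łoś entirely. Finally, weak compactness in the partition sense is the case $\iota=\theta=2$. (The inaccessibility half of weak compactness is not needed for the subsequent applications.)
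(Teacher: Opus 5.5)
Given a \emph{normal} measure, your argument works and needs no choice. The gap is that you do not show, in $\zf$, that an arbitrary measurable $\kappa$ carries one. What is right: the colour $\xi_\alpha$ is canonical, because any two sets in $U$ meet in a set of size $\kappa$. The recursion $H(c)=\{\beta\in\triangle_\alpha H(c_\alpha)\mid \xi_\beta=\xi^*\}$ makes no choices, and homogeneity follows exactly as you say. The paper gives no argument here; it only cites the $\zfc$ chain ``measurable $\Rightarrow$ weakly compact $\Rightarrow \kappa\to(\kappa)^{\iota}_{\theta}$''. Your direct Rowbottom proof suits the choiceless setting better. It also covers the paper's actual uses, $\omega_1$ and $\omega_2$ under $\ad$, where the club filter and the $\omega$-cofinal club filter, respectively, are normal measures.

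The statement concerns every measurable $\kappa$, proved in $\zf$, so you must pass from an arbitrary $\kappa$-complete nonprincipal ultrafilter to a normal one. Your fallback says a $<_U$-minimal function among those unbounded mod $U$ exists without $\dc$, ``taking pointwise least candidates''. That argument does not work. The pointwise minimum of a family of unbounded functions need not be unbounded. For the family of all unbounded functions, even restricted to a fixed set in $U$, it is identically $0$, because changing a function at one point does not affect unboundedness mod $U$. More generally, if a set $S$ has no $<_U$-minimal element, its pointwise minimum lies strictly $<_U$-below every member of $S$, so pointwise minimization cannot produce the minimum.

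What you need is the well-foundedness of $<_U$. The standard proof assumes failure of minimality and extracts a descending sequence $f_0>_U f_1>_U\cdots$, which countable completeness then refutes. That extraction is a use of $\dc$. In $\zf$, ``no descending $\omega$-sequence'' does not give minimal elements.

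Your later check that $f_*U$ is normal, given a minimal $f$, is correct and does avoid {\L}o\'s's theorem. A function regressive on a set in $f_*U$, composed with $f$, is $<_U f$. It is therefore bounded mod $U$, hence constant mod $U$ by $\kappa$-completeness. So as written you have proved the partition relation in $\zf$ for cardinals carrying a normal measure, or for all measurable cardinals under $\dc$. For the full statement in $\zf$ you need a genuine argument for the normalization step, or else you should add normality as a hypothesis.
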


In order to reinforce the idea that $\ad$ provides strong Ramsey-type consequences, we recall the following classical results.

\begin{theorem}[$\ad$]\label{thm:martin-omega1}
    $\omega_1$ is measurable \cite[Theorem 28.2 by D. Martin]{Kanamori2003}. Moreover, $\omega_1\to(\omega_1)^{\omega_1}_{2}$; hence $\omega_1\to(\omega_1)^{\omega_1}_{\theta}$ for every $\theta<\omega_1$.
\end{theorem}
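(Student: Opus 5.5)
The statement bundles three claims: Martin's theorem that $\omega_1$ is measurable under $\ad$, Martin's strong partition relation $\omega_1\to(\omega_1)^{\omega_1}_2$, and the extension from $2$ to any $\theta<\omega_1$ colours. The first two are cited from \cite{Kanamori2003}, so I would only reconstruct their shape and put the real work into the "hence" clause. Throughout I use that $\ad$ implies countable choice for sets of reals, $\mathsf{AC}_\omega(\mathbb R)$. This lets us choose countably many winning strategies, since each strategy is coded by a real.

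\textbf{Measurability.} I would use Martin's cone lemma. Under $\ad$, every Turing-invariant set of reals either contains a cone or is disjoint from one; the proof plays the game in which the players build $x$ jointly, and observes that the cone above a winning strategy is included or excluded. The cones thus generate an ultrafilter on the Turing degrees. It is countably complete because, given countably many cones, we choose their bases by $\mathsf{AC}_\omega(\mathbb R)$ and join them. Pushing this ultrafilter forward along $d\mapsto\omega_1^{d}$, the least ordinal not recursive in $d$, gives a countably complete ultrafilter on $\omega_1$. It is nonprincipal because $\omega_1^{d}$ is unbounded on every cone. Hence $\omega_1$ is measurable.

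\textbf{Strong partition for two colours.} I would follow Martin's game argument.
\begin{enumerate}
\item Fix $c:[\omega_1]^{\omega_1}\to 2$. Restrict attention to increasing functions $f:\omega_1\to\omega_1$ of uniform cofinality $\omega$ that are coded by reals. A countable sequence of reals coding elements of $\mathrm{WO}$ determines each value $f(\alpha)$ as a supremum.
\item Set up the game: players I and II alternately play digits of reals $x$ and $y$, and the interleaving of the ordinals coded along $x$ and $y$ defines a function $f_{x,y}$. Player I wins iff $c(\operatorname{ran} f_{x,y})=0$, with suitable conventions when the codes are ill-founded.
\item Take a winning strategy $\sigma$ for one of the players. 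By $\Sigma^1_1$-boundedness, the set $C$ of countable ordinals closed under $\sigma$ is club.
\item Show that every $A\in[C]^{\omega_1}$ of the appropriate "$\omega$-closed" form is the range of some play according to $\sigma$. Then $c$ is constant on those sets, and thinning $C$ gives a homogeneous set of size $\omega_1$.
\end{enumerate}
I expect the main obstacle to be step 4, the coding and boundedness bookkeeping that shows every suitable $A$ is produced by a play. I would take that part from \cite{Kanamori2003} rather than redo it.

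\textbf{From $2$ to $\theta<\omega_1$ colours.} This is the part the "hence" asserts, and I would prove it from the club form of the above: for each $c:[\omega_1]^{\omega_1}\to2$ there is a club $C_c$ such that $c$ is constant on all sets of the correct type contained in $C_c$. Steps 3 and 4 above produce exactly this club.
\begin{enumerate}
\item Given $c:[\omega_1]^{\omega_1}\to\theta$ with $\theta\le\omega$ (countable $\theta$ is in bijection with a subset of $\omega$), define $c_n(A)$ to be the $n$-th binary digit of $c(A)$.
\item For each $n$, choose a club $C_n$ for $c_n$. These choices are legitimate by $\mathsf{AC}_\omega(\mathbb R)$, since each $C_n$ is determined by a winning strategy, which is a real.
\item Let $C=\bigcap_n C_n$; a countable intersection of clubs is club. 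On sets of the correct type inside $C$, every digit $c_n$ is constant, so $c$ itself is constant.
\item Extract a homogeneous set of size $\omega_1$ from $C$, by taking every $\omega$-th element, exactly as in the two-colour case.
\end{enumerate}
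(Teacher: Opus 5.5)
Your proposal is correct. On the first two clauses it matches the paper, which proves nothing here: it cites Kanamori for Martin's cone-measure proof of measurability, which is exactly what you reconstruct, and for Martin's game proof of $\omega_1\to(\omega_1)^{\omega_1}_2$, which you also defer.

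Where you diverge is the ``hence''. The paper presents the $\theta$-colour relation as an immediate consequence of the bare two-colour relation. You instead pass through the club form of Martin's theorem: you split $c$ into binary digits, choose the countably many winning strategies with $\mathsf{AC}_\omega(\mathbb R)$, and intersect the resulting clubs. For finite $\theta$ the paper's reading is unproblematic, since one iterates the two-colour relation finitely often in $\zf$. For infinite $\theta$ that naive iteration breaks down: it needs $\omega$ many choices of homogeneous sets, and a decreasing $\omega$-sequence of uncountable subsets of $\omega_1$ can have countable intersection. Your argument makes explicit that $\ad$ is used a second time here.

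The price is that you lean on the internal structure of Martin's proof (homogeneous clubs indexed by strategies). You could avoid this in two steps.
\begin{itemize}
\item The club form already follows in $\zf$ from the bare two-colour relation. Take the limit points of a set homogeneous for $A\mapsto c(\{\sup_n a_{\omega\alpha+n}\mid\alpha<\omega_1\})$, where $a_\xi$ is the $\xi$-th element of $A$.
\item The countably many homogeneous clubs can then be selected through reals, using the cone measure from your first part. Choose, via $\mathsf{AC}_\omega(\mathbb R)$, reals $z_n$ with $\{\omega_1^d\mid d\geq_T z_n\}$ inside a homogeneous club for $c_n$. Then work inside a club contained in $\{\omega_1^d\mid d\geq_T\bigoplus_n z_n\}$.
\end{itemize}

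Two small corrections. First, in the final extraction take the $(\omega\cdot\alpha+\omega)$-th elements of $C$, not the $(\omega\cdot\alpha)$-th ones. The latter are continuity points at limit $\alpha$, so they yield subsets that are not of the correct type. Second, your sketch of the two-colour game leaves implicit how a single play, which is just a pair of reals, determines an entire function of the correct type, and why every such function into the closure club is realized. That is the real content of Martin's argument; it is harmless here only because you, like the paper, take it from the reference.
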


\begin{theorem}[$\ad$]\label{thm:solovay-omega2}
    $\omega_2$ is measurable \cite[Theorem 28.6 by R. Solovay]{Kanamori2003}.
\end{theorem}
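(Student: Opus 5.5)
The statement to prove is Theorem~\ref{thm:solovay-omega2}: under $\ad$, $\omega_2$ is measurable. I will reconstruct Solovay's argument, which passes through the measurability of $\omega_1$ (Theorem~\ref{thm:martin-omega1}) and the strong partition property $\omega_1\to(\omega_1)^{\omega_1}_2$.

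\emph{Step 1: $\omega_2$ as an ultrapower.} Let $\mu$ be the club filter on $\omega_1$. Under $\ad$ it is a countably complete normal ultrafilter, and this is the measure given by Theorem~\ref{thm:martin-omega1}. Form the ultrapower $\omega_1^{\omega_1}/\mu$. The strong partition property, via Martin's argument with functions of the correct type, gives the following: every $f:\omega_1\to\omega_1$ is $\mu$-almost everywhere dominated by a function of uniform cofinality $\omega$ that is continuous and increasing. From this one shows that the ordinals below $\omega_2$ are exactly the ranks of functions in $\omega_1^{\omega_1}/\mu$. In other words, $\omega_2=j_\mu(\omega_1)$, and $\omega_2$ is regular.

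\emph{Step 2: the candidate measure.} Define $\nu$ on $\omega_2$ by setting $A\in\nu$ if and only if $A\cap\omega_2$ contains an $\omega$-club restricted to ordinals of cofinality $\omega$. Equivalently, $\nu$ is the $\omega$-club filter on $S^{\omega_2}_\omega$. Countable completeness and normality of this filter are routine. They follow from the regularity of $\omega_2$, since a countable intersection of $\omega$-clubs is again an $\omega$-club. The only real issue is whether $\nu$ is an ultrafilter.

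\emph{Step 3: ultrafilter property (the main obstacle).} Fix $A\subseteq S^{\omega_2}_\omega$. I need to show that either $A$ or its complement contains an $\omega$-club. The plan is to code ordinals $\alpha<\omega_2$ of cofinality $\omega$ by reals. By Step 1, each such ordinal is represented by a function $\omega_1\to\omega_1$, and by Kunen's analysis of $\omega_1$-representations these functions are coded via $\Delta^1_3$/Martin-type codings. Using these codes, consider the following game. Player I and Player II alternate producing reals that code increasing countable sequences of ordinals below $\omega_2$, with each player playing above the other's previous ordinals. Player I wins if the supremum of the combined sequence lies in $A$.

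By $\ad$ one player has a winning strategy $\sigma$. The closure points of $\sigma$ then form an $\omega$-club that lies inside $A$ or inside its complement. These closure points are the $\beta<\omega_2$ such that, whenever the opponent plays codes for ordinals below $\beta$, the strategy answers below $\beta$. This is a boundedness argument: $\sigma$ is a continuous map on reals, and the $\Sigma^1_1$-boundedness of ordinals coded below each ordinal keeps the closure points unbounded.

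The delicate point is that ordinals of $\omega_2$ are not directly codable by single reals in a uniform way. I expect to handle this as Solovay did. The ordinals are coded through sharps and functions modulo $\mu$, and the boundedness lemma needed is that for each $\beta<\omega_2$ the set of codes of ordinals below $\beta$ is handled by a countable union of $\Sigma^1_1$-bounded pieces, which is fine since the coded ordinals $<\beta$ have size $\omega_1$ under $\ad$. Once this is established, $\nu$ is a countably complete, in fact $\omega_2$-complete by normality, ultrafilter on $\omega_2$. Hence $\omega_2$ is measurable.
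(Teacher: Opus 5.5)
The paper gives no proof of this statement; it quotes Solovay's theorem from Kanamori. Your reconstruction therefore has to stand on its own, and it does not. Step~3, which you rightly call the main obstacle, is the entire content of the theorem, and it is left unproved; the sketch offered for it would not work.

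Solovay's closure-point argument at $\omega_1$ succeeds because countable ordinals are coded by elements of WO, and $\Sigma^1_1$ subsets of WO are bounded. So a strategy's answers to codes of ordinals below $\beta$ form a continuous image of a Borel set contained in WO, and are bounded below $\omega_1$. At $\omega_2$, which is $\delta^1_2$ under $\ad$, the obvious analogues fail:
$\Sigma^1_2$-norms have length $<\omega_2$ by Kunen--Martin; $\Pi^1_2$ lacks the prewellordering property; and $\Sigma^1_1$-boundedness says nothing about ordinals above $\omega_1$.

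Your remark that only $\omega_1$ many ordinals lie below $\beta$ does not bound the strategy's answers. Those answers form the image of a set of reals, and under $\ad$ such images can be cofinal in $\omega_2$. With the sharp/$L[x]$ coding you allude to, nothing in your argument stops a strategy from answering a code of a small ordinal built from a real $x$ with a code of an ordinal near $(\omega_1^+)^{L[x]}$. As $x$ varies these answers are cofinal in $\omega_2$, and then there are no closure points at all. Also, if the moves are literally reals, determinacy of the game is $\mathsf{AD}_{\mathbb R}$, not $\ad$; the codes must be played bit by bit in an integer game. The missing idea is exactly a coding of ordinals below $\omega_2$ with a usable boundedness property, and ``handle this as Solovay did'' does not supply it.

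The other steps have problems as well. The domination claim in Step~1 is false as written. If $g\colon\omega_1\to\omega_1$ is continuous and increasing, then $g(\lambda)=\lambda$ for every limit $\lambda$ with $g[\lambda]\subseteq\lambda$. Hence $[g]_\mu=\omega_1$, and $g$ cannot dominate $\alpha\mapsto\alpha+1$ on a club; functions of the correct type are discontinuous everywhere. The bound $j_\mu(\omega_1)\le\omega_2$ is usually derived from the fact that under $\ad$ every $f\colon\omega_1\to\omega_1$ is constructible from a real. The regularity of $\omega_2$ is merely asserted; in $\zf$ it is not automatic, and it is normally obtained as a corollary of measurability.

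In Step~2, countable completeness and normality do not follow from regularity. To intersect sets that merely contain $\omega$-clubs you must choose witnessing clubs, but $\ad$ only gives $\mathsf{AC}_\omega(\mathbb R)$. Normality would need $\omega_2$-many such choices, which $\ad$ does not provide even for sets of reals.

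The tool you quote at the outset is what actually closes Step~3. Using $\omega_2=\omega_1^{\omega_1}/\mu$, colour $f\in[\omega_1]^{\omega_1}$ by whether $[f']_\mu\in A$, where $f'(\alpha)=\sup_n f(\omega\cdot\alpha+n)$. Then take a club homogeneous for all functions of the correct type (Martin's form of $\omega_1\to(\omega_1)^{\omega_1}$). This gives you:
\begin{itemize}
\item homogeneity decides $A$ without any real coding of ordinals below $\omega_2$;
\item the resulting measure concentrates on ordinals of cofinality $\omega$, as your candidate does;
\item countable completeness follows with no choice from the countably-many-colours version, by colouring $f$ with the least $n$ such that $[f']_\mu\notin A_n$.
\end{itemize}
What remains is a genuine normality argument to obtain $\omega_2$-completeness.
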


\begin{theorem}[$\ad$]\label{thm:hindman-omega1-positive}
Let $V$ be a $\mathbb{Q}$-vector space with a well-orderable basis.
\begin{enumerate}
    \item If $\operatorname{dim}(V)=\omega_1$, then $V\to(2)^{\operatorname{FS}}_{\omega}$.
    \item If $\operatorname{dim}(V)=\omega_2$, then $V\to(2)^{\operatorname{FS}}_{\omega_1}$.
\end{enumerate}
\end{theorem}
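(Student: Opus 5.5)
We would prove both parts by the classical route: from a colouring of $V$ build a colouring of pairs of basis indices, apply a partition relation to get a large homogeneous set of indices, and then read off two basis vectors $u,v$ with $u$, $v$, $u+v$ all the same colour.

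Fix a basis $\mathcal{B}=\{b_\alpha\mid\alpha<\kappa\}$ indexed by $\kappa\in\{\omega_1,\omega_2\}$ via a well-ordering. Let $c:V\longrightarrow\theta$ be a colouring, with $\theta=\omega$ in case (1) and $\theta=\omega_1$ in case (2). Define $d:[\kappa]^{2}\to\theta\times\theta\times\theta$ by
\[
d(\{\alpha<\beta\})=\bigl(c(b_\alpha),\,c(b_\beta),\,c(b_\alpha+b_\beta)\bigr).
\]
Since $|\theta\times\theta\times\theta|=\theta$ (using the canonical well-ordered pairing, so no choice is needed), $d$ is a colouring of $[\kappa]^2$ into $\theta$ colours. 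This definition is the crucial trick: coding the colours of the singletons into the pair colouring is what lets a single homogeneous set control all three relevant colours at once.

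The next step is to obtain homogeneity.
\begin{itemize}
\item In case (1), Theorem~\ref{thm:martin-omega1} and Theorem~\ref{thm:measurable-partition} give that $\omega_1$ is measurable, hence $\omega_1\to(\omega_1)^2_\omega$.
\item In case (2), Theorem~\ref{thm:solovay-omega2} and Theorem~\ref{thm:measurable-partition} give $\omega_2\to(\omega_2)^2_{\omega_1}$, since $\omega_1<\omega_2$.
\end{itemize}
We must check that Theorem~\ref{thm:measurable-partition} is valid without $\ac$ for these uses. The standard derivation of $\kappa\to(\kappa)^n_\theta$ from a $\kappa$-complete normal ultrafilter (Rowbottom's argument) works in $\zf$ when $\kappa$ is a well-ordered cardinal, because all choices can be made by taking minima. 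Alternatively, for $\omega_1$ one can simply cite $\omega_1\to(\omega_1)^{\omega_1}_\theta$ directly. In either case we get $H\subseteq\kappa$ of size $\kappa$, and certainly with at least three elements, on which $d$ is constant, say with value $(i,j,k)$.

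The last step is to force $i=j=k$ and pick the configuration. Take $\alpha<\beta<\gamma$ in $H$.
\begin{itemize}
\item From the pair $\{\alpha,\beta\}$ we get $c(b_\beta)=j$.
\item From the pair $\{\beta,\gamma\}$ we get $c(b_\beta)=i$, so $i=j$.
\end{itemize}
This alone does not give $k=i$, so we enlarge the target set. Replace $d$ by a colouring of triples $[\kappa]^3$ that records the colours of $b_\alpha$, $b_\beta$, $b_\gamma$, $b_\alpha+b_\beta$, $b_\alpha+b_\gamma$, $b_\beta+b_\gamma$, and $b_\alpha+b_\beta+b_\gamma$; the same partition relations hold for exponent $3$. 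Homogeneity then makes the colour of every basis vector indexed in $H$ equal to $i$, and the colour of every pairwise sum equal to $k$. Even so, $k$ may differ from $i$: the colouring $c(v)=|\operatorname{supp}(v)|$ shows that sums of basis vectors alone cannot work. So we must use general vectors rather than basis vectors. Colour pairs of rational vectors built from a fixed template, such as $x=b_\alpha-b_\beta$ and $y=b_\beta-b_\gamma$, whose sum $b_\alpha-b_\gamma$ has the same shape as each summand. Colour each triple $\alpha<\beta<\gamma$ by the colour of $b_\alpha-b_\beta$ for its first two elements. Homogeneity on a four-element set $\alpha<\beta<\gamma<\delta$ then gives $c(b_\alpha-b_\beta)=c(b_\beta-b_\gamma)=c(b_\alpha-b_\gamma)$. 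So $x=b_\alpha-b_\beta$ and $y=b_\beta-b_\gamma$ are distinct, and $\operatorname{FS}(\{x,y\})=\{x,y,x+y\}$ is monochromatic. In fact exponent $2$ suffices: colour $\{\alpha<\beta\}$ by $c(b_\alpha-b_\beta)$ and take $\alpha<\beta<\gamma$ in a homogeneous set. This final step, finding vectors whose sum lies in the same homogeneous family as the summands, is the main point, and it yields both (1) and (2).
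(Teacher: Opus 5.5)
Your final argument is exactly the paper's proof, up to a sign convention, so the proposal is correct: colour $\{\alpha<\beta\}$ by $c(b_\alpha-b_\beta)$, take $\alpha<\beta<\gamma$ in a homogeneous set given by $\omega_1\to(\omega_1)^2_\omega$ or $\omega_2\to(\omega_2)^2_{\omega_1}$, and use $x=b_\alpha-b_\beta$, $y=b_\beta-b_\gamma$. The earlier detours (singleton colours, sums of basis vectors, triples) are unnecessary and should be cut; also, in your $\zf$ remark, Rowbottom's argument needs no choice not because of ``taking minima'' (subsets of $\kappa$ are not well-ordered under $\ad$), but because for exponent $2$ the colour $\xi_\alpha$ with $\{\beta>\alpha : d(\{\alpha,\beta\})=\xi_\alpha\}$ in the ultrafilter is unique and the homogeneous set is an explicitly defined diagonal intersection.
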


    \begin{proof}
        Let $\mathcal{B}=\{b_{i}|i<\kappa\}$ (where $\kappa$ is either $\omega_1$ or $\omega_2$, accordingly) be a basis of $V$ and $c:V\to\lambda$ (where $\lambda$ is either $\omega$ or $\omega_1$, accordingly). We define $d:[\kappa]^{2}\to\lambda$ by $d(\{\alpha,\beta\})=c(b_{\beta}-b_{\alpha})$ whenever $\alpha<\beta$. Then, since $\kappa$ is measurable under $\ad$ and therefore $\kappa\longrightarrow(\kappa)_\lambda^2$, there exists an $M\subseteq\kappa$ with $|M|=\kappa$ such that $[M]^2$ is $d$-monochromatic, say in colour $i$. Let $\alpha,\beta,\gamma\in M$; assume without loss of generality that $\alpha<\beta<\gamma$, and define $v=b_\beta-b_\alpha$ and $w=b_\gamma-b_\beta$. Then, we have
        \begin{eqnarray*}
            c(v)&=&c(b_{\beta}-b_{\alpha})=d(\{\alpha,\beta\})=i,\\
            c(w)&=& c(b_\gamma-b_\beta)=d(\{\beta,\gamma\})=i,\\
            c(v+w)&=& c(b_{\gamma}-b_{\alpha})=d(\{\alpha,\gamma\})=i,
        \end{eqnarray*}
        that is, $\operatorname{FS}(\{v,w\})$ is $c$-monochromatic (in colour $i$).
    \end{proof}
    
    \begin{remark} \label{rem:hindman-omega2}
        Recall that, by Martin's theorem, $\omega_1\to(\omega_1)_{2^\omega}^2$. Therefore, by using the same argument as in the previous proof, we may conclude (under $\ad$) that, for $V$ a $\mathbb Q$-vector space of dimension $\omega_1$, we have $V\to(2)^{\operatorname{FS}}_{2^\omega}$.
    \end{remark}

\section{Owings-like results}\label{sec:owings}

We now turn to pairwise-sum configurations, in the spirit of Owings’ problem. 
In contrast with the finite-sums phenomena studied in the previous section, the behaviour of $M+M$ under colourings exhibits a different interaction between descriptive-set-theoretic regularity and algebraic structure.

We begin by introducing the relevant notation and recalling the known background in $\zfc$. 
We then establish a positive result for $\mathbb{R}$ under dependent choice, derived from regularity properties. 
Finally, we analyze algebraic limitations in cancellative groups and present positive results for large-dimensional $\mathbb{Q}$-vector spaces obtained from strong partition properties.

\subsection{Pairwise-sum notation and background}\label{subsec:owings-background}

\begin{definition}\label{def:owings-property}
Let $(G,+)$ be a commutative semigroup, and let $\kappa,\theta$ be cardinals. 
We write $G\to(\kappa)^{\cdot+\cdot}_{\theta}$ to denote the statement that for every colouring of $G$ in $\theta$ colours, there exists $M\subseteq G$, $|M|=\kappa$, such that $M+M$ is monochromatic.
\end{definition}

In the previous definition, $M+M$ denotes the set $\{a+b\big|a,b\in M\}$. Owings's problem \cite{Owings1974} asks whether $\mathbb N\to(\omega)^{\cdot+\cdot}_{2}$; this problem has been open since 1974 and, surprisingly, the main difficulty seems to lie on the requirement of using precisely two colours, since Hindman \cite{Hindman1979} has proved that $\mathbb N\nrightarrow(\omega)^{\cdot+\cdot}_{3}$. Recently, there has been an important amount of work devoted to studying these types of Owings-type problems in the setting of the additive group $\mathbb R$, and nowadays it is known that $\mathbb R\to(\omega)^{\cdot+\cdot}_{2}$ holds \cite{Zhang2020}, and that the statement $(\forall k<\omega)(\mathbb R\to (\omega)^{\cdot+\cdot}_{k})$ is both independent from \cite{HindmanLeaderStrauss2017}, and consistent with \cite{KomjathLeaderRussellShelahSoukupVidnyanszky2019,Zhang2020}, the $\zfc$ axioms.

So we proceed to study these kinds of statements, under $\zf$ plus possibly other assumptions, for some particular cases of $G$ (mostly either $\mathbb R$, or $\mathbb Q$-vector spaces with basis), for some specific configurations of $\kappa$, $\theta$.

It is worth noting that results about the finite-finite configuration are already known completely, since \cite[Theorem 1]{FernandezSarmientoVera2024} completely characterizes those $G$ for which it is the case that 
$(\forall n,r<\omega)(G\to (n)^{\cdot+\cdot}_{r})$. The proof of this result, belonging completely to the finite realm, does not use any version of the Axiom of Choice. In what follows, we explore other configurations.

\subsection{Positive results on $\mathbb{R}$ from regularity}\label{subsec:owings-reals}

Hindman, Leader and Strauss \cite{HindmanLeaderStrauss2017} proved, in the $\zfc$ context, that if $N\subseteq\mathbb R$ is a Baire-measurable nonmeagre set, then there is an $X\in[\mathbb R]^{\omega_1}$ such that $X+X$ is monochromatic. Their proof only uses the Axiom of Choice when recursively choosing the ``next'' element of $X$, but the argument that such next element exists can be carried out in $\zf$ only. Therefore we may assume a weaker hypothesis, such as $\dc$, and use an entirely similar argument to obtain a countable such set $X$. We only sketch the proof for completeness, and refer the reader interested in some more specific details to \cite[Lemma 4.1 and Theorem 4.2]{HindmanLeaderStrauss2017}.





\begin{lemma}[$\dc$]\label{lem:hls}
If $N\subseteq\mathbb{R}$ is nonmeagre and Baire-measurable, then there exists an $X\in[\mathbb{R}]^{\omega}$ such that $X+X\subseteq N$.
\end{lemma}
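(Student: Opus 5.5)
We will follow the Hindman--Leader--Strauss argument and build $X=\{x_n\mid n<\omega\}$ recursively, using $\dc$ only to make the countably many successive choices. First we normalize $N$. Since $N$ is Baire-measurable and nonmeagre, there is a nonempty open interval $U$ with $U\setminus N$ meagre; write $U\setminus N\subseteq\bigcup_{k<\omega}F_k$ with each $F_k$ closed nowhere dense. Choosing such a sequence $(F_k)$ needs only countably many choices of codes, which $\dc$ handles; in fact Baire-measurability hands us a witnessing $G_\delta$ or $F_\sigma$ code. Set $M=\bigcup_k F_k$. Shrinking $U$, and translating by the midpoint $u$ of $U$, we may view $U$ as $u+(-\varepsilon,\varepsilon)$, and we aim for elements of the form $x_n=u/2+y_n$ with $|y_n|<\varepsilon/2$. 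Then every sum $x_n+x_m=u+(y_n+y_m)$ lies in $U$, and it suffices to ensure $u+y_n+y_m\notin M$ for all $n\le m$.

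Next comes the recursive step. Suppose $y_0,\dots,y_{n-1}$ have been chosen. We need a $y$ with $|y|<\varepsilon/2$, distinct from the previous ones, such that $u+2y\notin M$ and $u+y_i+y\notin M$ for every $i<n$. Each condition excludes a set of the form $\{y\mid u+y_i+y\in M\}=M-u-y_i$ or $\{y\mid u+2y\in M\}=\tfrac12(M-u)$. Each of these is meagre, since translation and dilation are homeomorphisms of $\mathbb R$. Hence the union of the finitely many excluded sets together with the finite set $\{y_0,\dots,y_{n-1}\}$ is meagre. By the Baire category theorem, which holds in $\zf$ for $\mathbb R$, or at least for countable unions of closed nowhere dense sets with given codes, the interval $(-\varepsilon/2,\varepsilon/2)$ is not contained in it. So a suitable $y$ exists. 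The conditions are checked against all previous points, so the sums $x_i+x_j$ with $i<j$ are handled when $x_j$ is added, and the doubles $2x_j$ are handled by the condition $u+2y\notin M$.

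Finally we apply $\dc$ to the relation that extends a finite good sequence $(y_0,\dots,y_{n-1})$ by one good $y_n$. This relation is total by the previous paragraph, so $\dc$ gives an infinite sequence, and $X=\{u/2+y_n\mid n<\omega\}$ is an infinite set with $X+X\subseteq U\setminus M\subseteq N$. We expect the main obstacle to be purely foundational. One must confirm that the Baire category argument does not covertly use choice. The point is that once the nowhere dense closed sets $F_k$ are fixed, the finitely many translates needed at each stage are explicitly defined from them, so no further choice is needed. Moreover, the Baire category theorem for a \emph{given} sequence of closed nowhere dense sets is provable in $\zf$ by nested intervals with rational endpoints. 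Thus the only genuine use of $\dc$ is the outer recursion, together possibly with fixing the sequence $(F_k)$ at the start.
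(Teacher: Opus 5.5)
Your proposal is correct and follows essentially the same route as the paper: find an open set on which $N$ is comeagre, arrange that all pairwise sums land in it, and use $\dc$ to recursively pick each new point outside finitely many translates and dilates of the fixed meagre exceptional set, with Baire category guaranteeing such a point exists. The only differences are cosmetic. The paper translates $N$ so that $0\in A$ and picks $x_n\in(0,\delta)$, while you shift the candidates by $u/2$. Also, fixing $(F_k)$ needs no choice at all: meagreness of $U\setminus N$ already provides a witnessing sequence of nowhere dense sets, and you can take their closures.
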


    \begin{proof}
    Since $N$ is Baire-measurable, we may write $N=A\bigtriangleup M$, with $A$ open and $M$ meagre. Without loss of generality, by replacing $N$ with a translate if necessary, we may assume that $0\in A$ (since, if we are able to find $X+X\subseteq c+N$, letting $Y=-\frac{c}{2}+X$ it is readily checked that $Y+Y\subseteq N$).

    Pick a $\delta>0$ such that $(0, 2\delta)\subseteq A$ and choose some $x_{0}\in(0,\delta)\setminus\frac{1}{2}M$, it immediately follows that $\{x_{0}\}+\{x_{0}\}=\{2x_{0}\}\subseteq(0, 2\delta)\setminus M\subseteq N$. Using $\dc$, we recursively choose the remaining sequence of $x_n$ as follows: assuming by induction hypothesis that $X_n=\{x_k\big|k<n\}$ is such that $X_{n}+X_{n}\subseteq N$, we have
    \begin{equation*}
        (0,\delta)\subseteq
    \frac{1}{2}A\cap\Big(\bigcap_{k<n}(-x_{k}+A)\Big),
    \end{equation*}
    which in turn ensures that $(0,\delta)\setminus L$ is meagre, where
    \begin{equation*}
        L=\frac{1}{2}N\cap\Big(\bigcap_{k<n}(-x_{k}+N)\Big). 
    \end{equation*}
    So it suffices to choose any $x_n\in((0,\delta)\cap L)\setminus X_{n}$, and then the induction hypothesis will be preserved for $n+1$, which allows the construction to continue. So in the end, the set $X=\{x_{n}\big|n<\omega\}$ is as sought.
    \end{proof}

The previous result yields the following positive Owings-like theorem for $\mathbb R$.

\begin{theorem}[$\ad+\dc$]\label{thm:R-owings-positive}
$\mathbb{R}\to(\omega)^{\cdot+\cdot}_{\omega}$.
\end{theorem}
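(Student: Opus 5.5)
Fix a colouring $c:\mathbb R\longrightarrow\omega$. The plan is to locate a single colour class $N=c^{-1}(\{n\})$ that is both Baire-measurable and nonmeagre. Once such an $N$ is found, Lemma \ref{lem:hls} (which needs only $\dc$) yields an $X\in[\mathbb R]^{\omega}$ with $X+X\subseteq N$. Hence $X+X$ is $c$-monochromatic, and this witnesses $\mathbb R\to(\omega)^{\cdot+\cdot}_{\omega}$.

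First, I would invoke the classical consequence of $\ad$ that every set of reals has the Baire property (Banach--Mazur games, as in \cite{Kanamori2003} or \cite{Jech2003}). In particular, each colour class $N_n=c^{-1}(\{n\})$, for $n<\omega$, is Baire-measurable. No choice is needed for this step: for each fixed set the Baire property is a statement about that one set.

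Second, I need some $N_n$ to be nonmeagre. The sets $N_n$ partition $\mathbb R$ into countably many pieces. Suppose all of them were meagre. Under $\dc$ (indeed already under countable choice), a countable union of meagre sets is meagre. The subtle point is that we must choose, for each $n$, a countable family of closed nowhere dense sets covering $N_n$, and $\dc$ allows these choices. We would then conclude that $\mathbb R$ is meagre, contradicting the Baire category theorem, which holds in $\zf+\dc$ (in fact in $\zf$ for $\mathbb R$, by using the countable base of rational intervals). So some $N_n$ is nonmeagre, and applying Lemma \ref{lem:hls} to it finishes the proof.

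The main obstacle, modest as it is, is the choiceless bookkeeping in the second step: checking that the countable union of meagre sets is meagre genuinely uses only $\dc$, and that the Baire property of arbitrary sets under $\ad$ is available without further choice. Both are standard. The remaining requirement, that $X$ be infinite with $|X|=\omega$, is already part of the conclusion of Lemma \ref{lem:hls}, since the construction chooses $x_n\notin X_n$.
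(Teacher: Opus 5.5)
Your proposal is correct and follows essentially the same argument as the paper: pick a colour class that is nonmeagre, note that it has the Baire property under $\ad$, and apply Lemma \ref{lem:hls}. You also spell out a step the paper leaves implicit, namely that finding a nonmeagre class uses $\dc$ (countable unions of meagre sets are meagre) together with the Baire category theorem for $\mathbb R$.
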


    \begin{proof}
        Let $c:\mathbb{R}\to\omega$. Since $\mathbb{R}=\bigsqcup_{i<\omega}c^{-1}[\{i\}]$ is nonmeagre, there is $i_0<\omega$ such that $c^{-1}[\{i_0\}]$ is nonmeagre; furthermore this set is Baire-measurable (as are all sets of reals under $\ad$). Thus Lemma \ref{lem:hls} ensures the existence of $X\in[\mathbb{R}]^{\omega}$ with $X+X\subseteq c^{-1}[{i_0}]$.
    \end{proof}

This is a strong result, since it provides a countably infinite monochromatic set.

\begin{remark}\label{rem:R-finite}
Notice that, assuming only $\ad$, we can still prove that $\mathbb{R}\to(n)^{\cdot+\cdot}_{\omega}$ for every $n<\omega$. This is because, in Lemma \ref{lem:hls}, if one only wants to obtain a finite set $X$, one can do so in $\zf$ without using $\dc$.
\end{remark}

\subsection{Algebraic limitations and large-dimensional positives}\label{subsec:owings-algebra}

We now contrast the descriptive-set-theoretic positive result above with algebraic restrictions in cancellative groups. In the $\zfc$ context, the work in \cite[Theorem 6]{FernandezSarmientoVera2024} together with \cite{LeaderWilliams2024} imply that, if $(G,+)$ is an infinite abelian group without elements of order $4$, then $G\not\to(2)^{\cdot+\cdot}_{\omega}$. We are able to prove the same result below, in $\zf$ only, at the price of dropping some generality and obtaining a theorem only for $\mathbb Q$-vector spaces with basis.

\begin{theorem}\label{thm:owings-Q-negative}
Let $V$ be a $\mathbb{Q}$-vector space with basis. 
Then $V\not\to(2)^{\cdot+\cdot}_{\omega}$.
\end{theorem}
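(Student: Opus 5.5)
The plan is to reuse the inner-product colouring from the proof of Theorem~\ref{thm:hindman-three-colors-fails}. Since pairwise sums involve only two generators, a single application of Cauchy--Schwarz-type rigidity should suffice. Fix a basis $\mathcal B$ of $V$ and define $\langle\ |\ \rangle:V\times V\longrightarrow\mathbb Q$ coordinatewise, exactly as in that proof. No choice is needed, because the coordinates of a vector with respect to the fixed basis are uniquely determined. Then define the colouring $c:V\longrightarrow\mathbb Q$ by $c(v)=\langle v|v\rangle$. Since $\mathbb Q$ is countable (with a definable bijection to $\omega$), this is a colouring with $\omega$ colours.

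Now suppose $M=\{u,v\}$ with $u\neq v$ and $M+M=\{2u,u+v,2v\}$ monochromatic, say in colour $q$. From $c(2u)=c(2v)=q$ we get $\langle u|u\rangle=\langle v|v\rangle=\frac q4$. From $c(u+v)=q$ and bilinearity,
\begin{equation*}
q=\langle u|u\rangle+\langle v|v\rangle+2\langle u|v\rangle=\tfrac q2+2\langle u|v\rangle,
\end{equation*}
so $\langle u|v\rangle=\frac q4$. Hence
\begin{equation*}
\langle u-v|u-v\rangle=\langle u|u\rangle+\langle v|v\rangle-2\langle u|v\rangle=\tfrac q4+\tfrac q4-\tfrac q2=0.
\end{equation*}

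The only point requiring care is positive definiteness. The quantity $\langle w|w\rangle$ is a finite sum of squares of rationals (the coordinates of $w$), so it vanishes only if every coordinate of $w$ vanishes, that is, only if $w=0$. Applied to $w=u-v$, this gives $u=v$, a contradiction. Hence no $2$-element set $M$ has $M+M$ monochromatic, which is exactly $V\not\to(2)^{\cdot+\cdot}_{\omega}$.

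I expect no real obstacle beyond checking that the argument is choice-free, and it is: the colouring is explicitly defined from the given basis, and the verification is pure algebra.
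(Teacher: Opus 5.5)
Your proof is correct. It uses the same colouring as the paper, $c(v)=\sum_{b\in\mathcal B}q_b^2$; the two arguments differ only in how they rule out a monochromatic $\{2u,u+v,2v\}$.

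\textbf{The paper's route.} It embeds $\operatorname{span}(u,v)$ into $\mathbb R^n$ and derives $\|a\|=\|b\|\neq 0$ and $\|a+b\|=\|a\|+\|b\|$. Squaring gives $\langle a|b\rangle=\|a\|\|b\|$. It then invokes the equality case of Cauchy--Schwarz to get $b=\lambda a$, and finally eliminates $\lambda=\pm1$ by a short case analysis.

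\textbf{Your route.} You compute
\[
\langle u-v|u-v\rangle = 2\langle u|u\rangle+2\langle v|v\rangle-\langle u+v|u+v\rangle = 0,
\]
which is the parallelogram law, and then conclude $u=v$ directly from positive definiteness of a finite sum of rational squares.

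\textbf{Comparison.} Your argument stays entirely inside $\mathbb Q$ and needs no passage to $\mathbb R^n$. It does not appeal to the characterization of equality in Cauchy--Schwarz, and it needs no case split on the sign of $\lambda$ or on whether $a,b$ are nonzero. It is therefore shorter and more elementary. It also makes the parallel with the proof of Theorem~\ref{thm:hindman-three-colors-fails} more transparent: both are pure bilinearity computations. The paper's version is more geometric, showing that $a$ and $b$ would have to be collinear, but it spends more machinery to reach the same conclusion.
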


    \begin{proof}
        Let $\mathcal{B}$ be a basis of $V$, so $V\cong\bigoplus_{b\in\mathcal{B}}\mathbb{Q}$. As before, we define $\operatorname{supp}:V\to[\mathcal{B}]^{<\omega}$ by $\operatorname{supp}(v)=\{b\in\mathcal{B}|q_{b}\neq0\}$ whenever $v=\sum_{b\in\mathcal{B}}q_{b}b$; so that $\operatorname{supp}(v)$ is always a finite subset of $\mathcal B$. Now we let $c:V\to\mathbb{Q}$ be given by $c(v)=\sum_{b\in\mathcal{B}}q_{b}^2$, again, whenever $v=\sum_{b\in\mathcal{B}}q_{b}b$.
        Given any $v_{0},v_{1}\in V$, we prove that $X+X$, where $X=\{v_{0},v_{1}\}$, is not monochromatic as follows: inject $\operatorname{span}(v_0,v_1)$ into $\mathbb R^{n}$ (where $n=|\operatorname{supp}(v_1)\cup\operatorname{supp}(v_0)|$), in the obvious way. So it suffices to prove that there do not exist two distinct vectors $a,b\in\mathbb{R}^{n}$ such that the elements of the set $\{2a,2b,a+b\}$ all have the same Euclidean norm. If there were two such vectors $a,b$, we would have
        \begin{equation*}
            4\|a\|^{2}=\|a+b\|^{2}=4\|b\|^{2}.
        \end{equation*}
        Since $a,b$ are distinct, they both have to be nonzero, so $\|a\|=\|b\|\neq0$. Therefore $\|a\|+\|b\|=2\|a\|=\|a+b\|$.
        Squaring both sides of the previous equation we get $\|a\|^{2}+\|b\|^{2}+2\|a\|\|b\|=
        \|a\|^{2}+\|b\|^{2}+2\langle a|b\rangle$, which implies that $\|a\|\|b\|=\langle a|b\rangle$. The Cauchy--Schwarz inequality then tells us that the equation holds if and only if $b=\lambda a$ for some $\lambda\in\mathbb{R}$, however this implies that $\|a\|=\|b\|=\|\lambda a\|=|\lambda|\|a\|$, so $\lambda=\pm1$. This is a contradiction, as it implies either $a=b$, or $a+b=0$.
    \end{proof}

On the other hand, Leader and Russell showed that if $V$ is a $\mathbb{Q}$-vector space of sufficiently large dimension (at least $\beth_\omega$), then $V\to(\omega)^{\cdot+\cdot}_{\theta}$ for every natural number $\theta$ \cite[Theorem 2]{LeaderRussell2020}. 
Utilizing several very similar ideas, we are able to utilize the Axiom of Determinacy to derive the following positive result from the strong combinatorial properties of $\omega_1$ and $\omega_2$. We omit some details regarding the behaviour of the $\pi_i$, since these work exactly as in the proof of \cite[Theorem 2]{LeaderRussell2020}.

\begin{theorem}[$\ad$]\label{thm:owings-omega2}
Let $V$ be a $\mathbb{Q}$-vector space of dimension $\kappa\in\{\omega_1, \omega_2\}$. 
Then
\[
\forall\theta\in\mathbb{N}\quad(V\to(\omega)^{\cdot+\cdot}_{\theta}).
\]
\end{theorem}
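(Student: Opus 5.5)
The plan is to follow the proof of \cite[Theorem 2]{LeaderRussell2020}. In that proof the cardinal $\beth_\omega$ is used only to obtain finite-exponent partition relations via Erd\H{o}s--Rado. We replace it with the partition properties that $\ad$ gives at $\omega_1$ and $\omega_2$: Theorem~\ref{thm:martin-omega1}, and Theorems~\ref{thm:solovay-omega2} and~\ref{thm:measurable-partition}. These give $\kappa\to(\kappa)^{m}_{\theta}$ for every $m<\omega$ and finite $\theta$.

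Fix a basis $\{b_\alpha\mid\alpha<\kappa\}$ of $V$ (well-orderable, as in Theorem~\ref{thm:hindman-omega1-positive}) and a colouring $c:V\to\theta$. The difficulty, compared with Theorem~\ref{thm:hindman-omega1-positive}, is that $M+M$ contains both the doubles $2x$ and the mixed sums $x+y$. No single increasing tuple of basis vectors can represent both kinds with the same coefficient pattern. So I would not look for a single homogeneous pattern.

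Instead, following Leader--Russell, I would fix a finite template: a rational coefficient vector $\bar q$ together with finitely many ``slots'' $\pi_0,\dots,\pi_{r}$. Each candidate $x_n$ will be $\sum_j q_j b_{\alpha_{n,j}}$, whose ordinals are distributed in a prescribed interleaved way among the slots. The template is chosen so that every $2x_n$, and every $x_n+x_m$ with $n<m$, is the image of an increasing tuple of ordinals under one of finitely many fixed maps $F_1,\dots,F_p:[\kappa]^{m}\to V$. Each $F_i$ sends an increasing tuple to a vector whose coefficients on those basis elements follow a fixed pattern.

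With the template fixed, I colour $[\kappa]^{m}$ by the tuple $\bigl(c(F_1(s)),\dots,c(F_p(s))\bigr)$. This uses finitely many colours, namely $\theta^p$. Applying $\kappa\to(\kappa)^{m}_{\theta^p}$ gives a homogeneous $H\in[\kappa]^\kappa$. On $H$, each pattern $F_i$ then has a constant colour $k_i$. A further finite pigeonhole step, which is exactly the role of the $\pi_i$ in \cite{LeaderRussell2020}, adjusts the template (for example by iterating the homogeneity argument across slots) so that the relevant $k_i$ all coincide.

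Finally I read off $x_n$ for $n<\omega$ from the first $\omega$ many elements of $H$, taken in increasing order, so no choice is needed. Every element of $M+M$ is then $F_i$ of an increasing tuple from $H$ and has the common colour. Injectivity of $n\mapsto x_n$ is immediate from disjointness of supports.

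The main obstacle is the combinatorial design of the template. The coefficient multiset of $2x_n$ is never that of $x_n+x_m$, so the argument must show that finitely many patterns suffice and that their colours can be forced to agree. This is precisely the part delegated to Leader--Russell, and I would copy their construction verbatim. The only new checks are that their argument uses only finitely many finite-exponent partition relations at $\kappa$, and that all the selections are of least ordinals, so that $\dc$ is not needed.
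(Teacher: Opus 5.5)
Your framework is the right one: colour $[\kappa]^m$ by the colours of finitely many coefficient patterns, apply $\kappa\to(\kappa)^m_{\theta^p}$, then pigeonhole. But you delegate the step that actually proves the theorem, and the specifics you do commit to would make it fail.

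You take all $x_n$ with the same coefficient vector $\bar q$ and with pairwise disjoint supports. No homogeneity argument can rescue such a family. If $|\operatorname{supp}(x_n)|=k$ and the supports are pairwise disjoint, then $|\operatorname{supp}(2x_n)|=k$ but $|\operatorname{supp}(x_m+x_n)|=2k$ for $m\neq n$. So the $2$-colouring $v\mapsto\lfloor\log_2|\operatorname{supp}(v)|\rfloor \bmod 2$ from the proof of Theorem~\ref{thm:hindman-fails-uncountable} always gives $2x_n$ and $x_m+x_n$ different colours.

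Your sequencing also cannot work as stated. You fix the template, colour only its patterns, and then ``adjust the template'' so the colours agree. But which template to use depends on which patterns turn out to share a colour. So every candidate pattern must be built into the colouring before the partition relation is applied; no further iteration of homogeneity is needed.

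The missing idea is that the $x_n$ must form a $\Delta$-system with a nonempty root carrying fixed coefficients. The root is then doubled in both $2x_n$ and $x_m+x_n$, while the private part is doubled in the first and merely duplicated in the second.

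Concretely, the paper proceeds as follows. It colours $[\kappa]^{2\theta}$ simultaneously by the $\theta+1$ patterns $\pi_i$, each consisting of $i$ twos followed by $2(\theta-i)$ ones (left-padded with zeros). It obtains a homogeneous $N$ and pigeonholes indices $i_1<i_2\le\theta$ with the same colour. Only then is $x_n$ defined:
\begin{itemize}
\item coefficient $1$ on $i_1$ shared low coordinates;
\item coefficient $1$ on $i_2-i_1\ge 1$ coordinates private to $n$ (this, not disjointness, gives injectivity);
\item coefficient $\tfrac12$ on $2(\theta-i_2)$ shared high coordinates.
\end{itemize}
Then $2x_n$ realizes $\pi_{i_2}$ and $x_m+x_n$ realizes $\pi_{i_1}$.

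Since the twos of $2x_n$ must precede its ones, the shared coefficient-$\tfrac12$ coordinates must lie above the private coordinates of every $x_n$, and there are infinitely many of those. So when $i_2<\theta$ you cannot read the $x_n$ off the first $\omega$ elements of $H$. This is why the paper uses positions $\omega(r+1)+n$ and $\omega(i_2-i_1+1)+r$ in its enumeration of $N$.
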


    \begin{proof}
        Let $\mathcal{B}=\{b_{j}|j<\kappa\}$ be a basis for $V$ and let $c:V\longrightarrow\theta$ be arbitrary. We begin by defining the following sequences for $i<\theta+1$,
        \begin{equation*}
            \pi_{i}:=(\overbrace{2,\cdots,2}^{i\text{ times}},\overbrace{1,\cdots,1}^{2(\theta-i)\text{ times}})\in\{1,2\}^{2\theta-i}
        \end{equation*}
        Notice that $L:=\max_{i<\theta+1}|\pi_{i}|=\max_{i<\theta+1}(2\theta-i)=2\theta$, so
        \begin{equation*}
            \pi_{i}':=(\overbrace{0,\cdots,0}^{L-|\pi_{i}|})\frown\pi_{i}\in3^{L},
        \end{equation*}
        and we also define
         \begin{equation*}
             P_{i}':=\left\{\sum_{k<L}(\pi_{i}')_{k}b_{j_{k}}\;\Big|\;j_{0}<\cdots<j_{L-1}<\kappa\right\},
         \end{equation*}
        where $(\pi_{i}')_{k}$ denotes the $k$-th coordinate of $\pi_{i}'$. Now we define $\theta+1$ colourings $ c_{i}:[\kappa]^{L}\longrightarrow\theta$ by
        \begin{equation*}
            c_i(\{j_{0}<\cdots<j_{L-1}\}):=            c\left(\sum_{k<L}(\pi_{i}')_{k} b_{j_{k}}\right).
        \end{equation*}
        Now consider the colouring $d:[\kappa]^{L}\longrightarrow\theta^{\theta+1}$ given by $d(S)=(c_{i}(S))_{i<\theta+1}$. Since $\kappa$ is measurable, by Theorem~\ref{thm:measurable-partition} we have $\kappa\to(\kappa)^{L}_{\theta^{\theta+1}}$, and so we can find an $N\subseteq\kappa$ of cardinality $\kappa$ and fixed $I_{0},\cdots,I_{\theta}<\theta$ such that $[N]^L$ is monochromatic in colour $(I_{0},\cdots,I_{\theta})$. By the pigeonhole principle, there are  $i_{1}<i_{2}<\theta+1$ such that $I_{i_{1}}=I_{i_{2}}=:I$. Define $M:=\max_{j<2}(L-|\pi_{i_{j}}|)$, enumerate $\mathcal{B}':=\{b_j\big|j\in N\}=\{b'_{j}\mid j<\kappa\}$, and let 
        \begin{equation*}
        x_{n}:=\sum_{r<M}0b'_{r}+\sum_{r<i_{1}}b'_{M+r}+\sum_{r<i_{2}-i_{1}} b'_{\omega(r+1)+n}+\sum_{r<2(\theta-i_{2})}\frac{1}{2}b'_{\omega(i_{2}-i_{1}+1)+r}
        \end{equation*}
        for $n<\omega$. Finally, let $X:=\{x_{n}\mid n<\omega\}$. Then, for $m, n<\omega$, one can observe that $x_{m}+x_{n}\in P_{i_{1}}\cup P_{i_2}$, depending on whether $m\neq n$ or $m=n$; in any case, this implies that $c(x_m+x_n)=I$. Therefore, $X + X$ is monochromatic, as intended.
\end{proof}

\begin{remark}
    The proof of Theorem~\ref{thm:owings-omega2} of course goes through for any $\mathbb Q$-vector space whose dimension is a measurable cardinal. Under $\ad$, it is known that every $\omega_n$ for $n\geq 3$ is a singular cardinal (of cofinality $\omega_2$), so this reasoning is no longer applicable to vector spaces of such dimensions.
\end{remark}

\section{Conclusions}\label{sec:choice-ad}

We will make explicit which fragments of set-theoretic strength are used in the different parts of the paper. 
The results above arise from three logically distinct sources: Dependent Choice, determinacy-driven regularity on the reals and partition properties at $\omega_1$ and $\omega_2$, and purely algebraic rigidity.

\begin{description}

\item[Results in $\zf$] The finite-finite configurations, both for Hindman and Owings-type theorems (Theorem \ref{fin-fin-hindman} and \cite[Theorem 1]{FernandezSarmientoVera2024}, respectively) can be carried out without any assumptions additional to $\zf$. The infinite-finite configuration for Hindman-type theorems (Theorem \ref{fin-inf-hindman}) also works in $\zf$ for all semigroups, and specifically for Dedekind-infinite Abelian groups in a nontrivial way (alternatively, in $\zf$ plus the assumption that every infinite set is Dedekind-infinite one gets a theorem for all infinite Abelian groups).

Most notably, the negative Hindman-like results in the real line (regarding the uncountable-finite and finite-infinite configurations), i.e. Theorem \ref{thm:hindman-R-fails} and Corollary \ref{hindman-on-R-is-countable}, also hold in $\zf$ only. Finally, for both Hindman-like and Owings-like problems, we obtain negative results in the finite-infinite configurations (Theorems \ref{thm:hindman-three-colors-fails} and \ref{thm:owings-Q-negative}), for $\mathbb Q$-vector spaces with basis, and these proofs can be carried out in $\zf$.

\item[Results under $\dc$]
Our version of the $\Delta$-system lemma, Lemma \ref{lem:delta-system-dc}, uses only the Axiom of Dependent Choice, and it provides the combinatorial control of supports that is used in the proof of the negative Hindman-like result for the uncountable-finite configuration on $\mathbb Q$-vector spaces (Theorem \ref{thm:hindman-fails-uncountable}).

Likewise, Lemma \ref{lem:hls} (Hindman–Leader–Strauss) relies only on Dependent Choice; although in order to apply this lemma to obtain a positive Owings-type theorem will require more assumptions.

\item[Results under $\ad$]
It seems that $\ad$ provides us with many positive Ramsey-theoretic results, in contexts in which such results are negative under the usual $\zfc$ axioms. Specifically regarding this paper, the positive results in $\mathbb Q$-vector spaces of dimension $\omega_1$ or $\omega_2$ depend essentially on consequences of $\zf+\ad$. Theorems \ref{thm:martin-omega1} and \ref{thm:solovay-omega2} assert that, under determinacy, $\omega_1$ and $\omega_2$ satisfy strong partition relations analogous to those of measurable cardinals. These partition properties are used directly in the proofs of the positive Hindman and Owings-type results that we obtain (for the $2$-infinite Hindman configuration, Theorem \ref{thm:hindman-omega1-positive}; and for the infinite-finite Owings configuration, Theorem \ref{thm:owings-omega2}).

\item[Regularity of the reals under $\ad+\dc$]
Assuming $\ad+\dc$ provides us with a very powerful, positive, Owings-type result in the infinite-infinite configuration for $\mathbb R$ (Theorem \ref{thm:R-owings-positive}). This is, once again, in sharp contrast with the known $\zfc$ results, where such statements are known to be independent of the axioms.
\end{description}





This work provides a systematic comparison between finite-sums and pairwise-sums partition relations in the setting of $\zf$, plus additional assumptions such as $\dc$ or $\ad$ (or both). For finite-sums configurations, the uncountable phenomenon fails in cancellative structures such as $\mathbb{Q}$-vector spaces of uncountable dimension, confirming that the classical Hindman behaviour is essentially countable in this algebraic context. 
At the same time, determinacy restores strong partition properties at $\omega_1$ and $\omega_2$, yielding positive finite configurations in sufficiently large dimensions.

For pairwise-sums configurations, the situation differs substantially. 
Regularity properties available under determinacy ensure that every countable colouring of $\mathbb{R}$ admits an infinite monochromatic sumset. 
However, algebraic constraints once again impose strict limitations in highly cancellative groups, while large-dimensional vector spaces regain positive behaviour through determinacy-driven partition principles.

The picture that emerges is that infinite Ramsey phenomena in the absence of Choice are governed by a balance between algebraic structure and set-theoretic strength. 
Determinacy supplies large-cardinal-like partition properties at specific cardinals, but algebraic rigidity can still prevent uncountable homogeneity. 
Understanding precisely which algebraic conditions permit positive partition relations under weak choice principles remains a natural direction for further investigation.

In particular, it would be of interest to determine whether stronger uncountable versions of Owings-type configurations can be obtained under $\zf+\ad$, and to clarify the extent to which the positive results presented here depend essentially on determinacy rather than on weaker regularity assumptions.

\subsection*{Acknowledgements}
The first author would like to thank his co-authors for their exceptional mentorship during his doctoral research. All authors are grateful to one anonymous referee for suggesting some substantial improvements to the results in this paper.

\subsection* {Funding}
The second author was partially supported by internal grant SIP-20253559 and SECIHTI grant CBF2023-2024-334.

\bibliographystyle{amsplain}
\bibliography{the}
\end{document}